\documentclass[11pt, leqno, letterpaper]{amsart}

\usepackage{amsmath,amssymb, amsthm}
\usepackage{url}
 \usepackage{graphicx} 
 \usepackage{bbm, xcolor}
\usepackage{xfrac}
\usepackage{multicol}

\usepackage[margin=1in]{geometry}

\usepackage[english]{babel}

\newtheorem{theorem}{Theorem}

\newtheorem{proposition}[theorem]{Proposition}

\newtheorem{remark}[theorem]{Remark}

\begin{document}

\title[Stochastic growth process driven by an exponential Ornstein-Uhlenbeck process]{Growth rate of a stochastic growth process driven by an exponential 
Ornstein-Uhlenbeck process}
\author{Dan Pirjol}
\address
{School of Business, Stevens Institute of Technology, Hoboken, NJ 07030}
\email{dpirjol@stevens.edu}

\date{June 2021}
\keywords{Stochastic growth process, Lyapunov exponent, Lattice gases}

\begin{abstract}
We study the stochastic growth process in discrete time
$x_{i+1} = (1 + \mu_i) x_i$ with growth rate  
$\mu_i = \rho e^{Z_i - \frac12 Var(Z_i)}$ proportional to the exponential of an Ornstein-Uhlenbeck (O-U) process $dZ_t = - \gamma Z_t dt + \sigma dW_t$ sampled on a grid of 
uniformly spaced times $\{t_i\}_{i=0}^n$ with time step $\tau$.
Using large deviation theory methods we compute the asymptotic growth rate 
(Lyapunov exponent) $\lambda = \lim_{n\to \infty} \frac{1}{n} \log \mathbb{E}[x_n]$. 
We show that this limit exists, under appropriate scaling of the O-U parameters, and can be expressed as the solution of a variational problem.
The asymptotic growth rate is related to the thermodynamical pressure of a one-dimensional lattice gas with attractive exponential potentials. 
For $Z_t$ a stationary O-U process the lattice gas coincides with a system considered previously by Kac and Helfand. We derive upper and lower bounds on $\lambda$. 
In the large mean-reversion limit $\gamma n \tau \gg 1$ the two bounds
converge and the growth rate is given by a lattice version of the van der Waals equation of state. 
The predictions are tested against numerical simulations of the stochastic growth model.
\end{abstract}

\maketitle

\baselineskip18pt

\section{Introduction}

Stochastic exponential growth processes are widely used to model phenomena
in physics, economics, biology and ecology. 
We consider in this paper a discrete time stochastic growth process of the form
\begin{eqnarray}\label{xndef}
x_{i+1} = (1 + \mu_i ) x_i \,,
\end{eqnarray}
where $\mu_i$ is the growth rate for the period $(t_i,t_{i+1})$, on a grid with time step 
$\tau=t_{i+1}-t_{i}$. If the growth rate is deterministic and proportional to the time step $\mu_i = \mu \tau$, then the process
$x_i$ is a geometric growth process $x_n=x_0 (1+\mu \tau)^n$, which approaches 
in the continuous time limit $\tau \to 0$ with fixed $n \tau = t$ an exponential growth process $x(t) = x_0 e^{\mu t}$.
This type of growth process can be also described as a compounding process, as the 
increment at each time step is proportional to the initial value at the start of the compounding period. 
 
The simplest choice for the randomness of the growth rate $\mu_i$ is a white noise
process, see for example \cite{Tuckwell}. More realistic models allow the
growth rate for each period $\mu_t=f(Z_t)$ to depend on a Markov process 
$Z_t$. The Markov process $Z_t$ represents an environmental variable.
For example, in ecology the variable $x_n$ is the size of a population of a certain species, and the environmental Markov variable $Z_t$ could be the food supply or temperature, which determines the growth rate of the population. The response function $f(Z_t)$ 
modulates the effect of the environmental variable on the growth rate. Typically it is 
an increasing function approaching $f(Z)\to 0$ as $Z\to -\infty$ (adverse environment),
and a finite value $f(Z)\to f_{\rm max}$ as $Z\to +\infty$ (favorable environment).

Stochastic growth processes with Markovian growth rates following a Markov chain model with finite dimensional state space have been widely studied in this context.
The asymptotic growth rates in such models were studied in \cite{JCohen} and more recently in \cite{CohenTL} with application to Taylor's law of fluctuation scaling.
Chapter 14 of Caswell \cite{Caswell} and the monograph by Tulajapurkar \cite{Tuljapurkar} review the literature of models for age- and stage-structured population dynamics with environmental stochasticity.

In this paper we will consider stochastic growth models with growth rates of the form
\begin{eqnarray}\label{1}
\mu_i = \rho e^{Z_i - \frac12 \mbox{var}(Z_i)}
\end{eqnarray}
where $Z_i$ is a Gaussian Markov process sampled on the time grid
$\{t_i\}_{i=0}^n$. The normalization is chosen
such that $\mathbb{E}[\mu_i] = \rho>0$ is a constant. The model (\ref{1}) 
corresponds to an exponential dependence of the growth rate on the environmental 
variable $f(Z,t)=c(t)e^Z$. This choice satisfies two of the conditions for a response function: it is an increasing function and vanishes as $Z\to -\infty$. However since it grows without bound as $Z\to +\infty$, it allows arbitrarily large growth rates.

A similar stochastic growth model was studied in \cite{RMP2,PZ}, where
the Markov process $Z_t= \sigma W_t$ was assumed to be proportional to a 
standard Brownian motion. This model is not very realistic
for applications, where the environmental variable has often a mean-reverting
behavior or a stationary distribution. 
These properties are satisfied by the Ornstein-Uhlenbeck process. 
We will consider here the simplest model of this type, where $Z_t$ 
is an Ornstein-Uhlenbeck (O-U) process 
\begin{eqnarray}\label{OU}
dZ_t  = - \gamma Z_t dt + \sigma dW_t\,.
\end{eqnarray}
For simplicity we assumed that the process is mean-reverting around the zero level, although this assumption can be relaxed by a change of variable $Z_t \to Z_t + \theta$.

We will consider two types of initial condition: stationary O-U process,
and O-U process started at zero $Z_0=0$. The stationary O-U process is appropriate for modeling a steady state environment, while the model with O-U process started at zero is appropriate for situations when the state of the environment is known at the initial time.

In the context of population growth models, the use of Ornstein-Uhlenbeck processes to model the environmental variables has been discussed by Tuckwell \cite{Tuckwell} (Section 2.B). The connection between discrete and continuous time models of this type was discussed by Cumberland and Sykes \cite{Cumberland}. 

In mathematical finance, the stochastic growth process (\ref{1}) represents the bank account compounding interest in discrete time at a stochastic interest rate.
In the Black-Karasinksi model \cite{BlackK} the interest rate is proportional to the exponential of an Ornstein-Uhlenbeck process. In the limit of vanishing mean-reversion this reduces to the Black-Derman-Toy model where the interest rate follows a geometric Brownian motion, and corresponds to the setting considered in \cite{RMP2,PZ}.

In Section~\ref{sec:2} we show that the properties of the stochastic growth process $x_n$ can be mapped to the statistical mechanics of a one-dimensional lattice gas 
with attractive two-body interaction given by an exponential potential which depends on the initial condition for $Z_t$. The expectation $\mathbb{E}[x_n]$ has a natural interpretation as the grand
partition function of a lattice gas, and the growth rate of the expectation of the 
process can be mapped to the thermodynamical pressure of the lattice gas. 
The stationary $Z_t$ case is mapped to a lattice gas studied by Kac and Helfand \cite{KH} as a discrete analog of the Kac-Uhlenbeck-Hemmer system \cite{KUH} of one-dimensional particles interacting by long-range exponential attractive potentials.

In Section \ref{sec:3} we study the growth rates (Lyapunov exponents) of the process in mean $\lambda_n = \frac{1}{n} \log \mathbb{E}[x_n]$. 
We show that $\lim_{n\to \infty} \lambda_n$ exists, provided that the parameters $\sigma,\gamma$ are scaled appropriately with $n$. Using large deviation theory methods 
we give a result for the  (Lyapunov exponent) $\lambda= \frac{1}{n} \log \mathbb{E}[x_n]$ 
expressed as the solution of a variational problem, given by Theorem 1. This is the main result of the paper. 

In Section \ref{sec:4} we study the properties of the variational problem and derive upper and lower bounds on the Lyapunov exponent. These bounds converge in the large mean-reversion limit $a:=\gamma n\tau\gg 1$, which coincides with the mean-field (van der Waals) limit in the analog statistical mechanics system. In this limit the growth rate is described by the van der Waals lattice gas equation of state. 
The growth rate has discontinuous behavior at a critical value of the model
parameters, which corresponds to a phase transition in the lattice gas. 
In Section~\ref{sec:5} the theoretical results are compared with numerical simulations of the model. 

In Section \ref{sec:6} we discuss the relation of our results with existing results in the literature. The stochastic growth process driven by a stationary Ornstein-Uhlenbeck process is mapped to a lattice gas previously considered by Kac and Helfand \cite{KH}. We show that the large mean-reversion limit of our results corresponds to the van der Waals limit in the Kac, Helfand lattice gas, and we reproduce their results. 
An Appendix gives the details of proof of the main result.



\subsection{The Ornstein-Uhlenbeck process}

The OU process (\ref{OU}) can be solved exactly as
\begin{equation}\label{OUsol}
Z_{t} = e^{-\gamma t} Z_0 + \sigma \int_0^t e^{-\gamma(t-s)} dW_s \,.
\end{equation}

This process has constant mean $\mathbb{E}[Z_t]=\mathbb{E}[Z_0]$, which we will assume 
to be zero.  The variance is
\begin{equation}\label{Gdef}
\mathbb{E}[Z_t^2] = \mbox{var}(Z_0) e^{-2\gamma t} + G(t)\,,\quad 
G(t) :=  \frac{\sigma^2}{2\gamma} (1 - e^{-2\gamma t}) \,.
\end{equation}

We consider two initial conditions for the Ornstein-Uhlenbeck process $Z_t$:

\textit{Stationary OU process.} The stationary Ornstein-Uhlenbeck process (\ref{OU}) has time independent distribution $Z_t \sim N(0,\frac{\sigma^2}{2\gamma})$ for all $t\geq 0$ and
covariance 
\begin{eqnarray}\label{Zcov2}
\mbox{cov} (Z_{t_i}, Z_{t_j}) = \frac{\sigma^2}{2\gamma}
 e^{-\gamma |t_i-t_j|} \,.
\end{eqnarray}
The covariance is time homogeneous and depends only on the time difference $t_j-t_i$.

\textit{OU process started at zero $Z_0=0$.} 
For any $t>0$, $Z_t$ is normally distributed with zero mean and
time-dependent variance
\begin{eqnarray}
Z_t \sim N\left( 0, \frac{\sigma^2}{2\gamma}(1- e^{-2\gamma t}) \right) \,.
\end{eqnarray}

The covariance is
\begin{eqnarray}\label{Zcov}
\mbox{cov} (Z_i, Z_j) = \frac{\sigma^2}{2\gamma}
\left( e^{-\gamma |t_i-t_j|} - e^{-\gamma (t_i+t_j)} \right)\,.
\end{eqnarray}

\section{Relation to one-dimensional lattice gas}
\label{sec:2}

The expectation $\mathbb{E}[x_n]$ of the stochastic growth process $x_{i+1}=(1+\mu_i) x_i$ with multipliers $\mu_i = \rho e^{Z_i - \frac12 Var(Z_i)}$ given by the exponential of an 
O-U process has a simple physical interpretation in terms of the statistical mechanics
of a one-dimensional lattice gas with two-body potentials. 
Following an argument similar to that used in Sec.~5 of \cite{JSP},
this expectation can be expressed as a polynomial in $\rho$ of degree $n-1$
\begin{eqnarray}\label{sumrho}
M_n = \mathbb{E}[x_n] = \sum_{k=0}^n \rho^k \sum_{\mathcal{S}_k}
e^{\sum_{\{ a<b\}\in \mathcal{S}_k} \mbox{cov}(Z_a,Z_b)} =
\sum_{k=0}^{n-1} \rho^k Z_k(n)\,,
\end{eqnarray}
where $\mathcal{S}_k$ is an index set of $k$ indices chosen from the indices
$\{1,2,\cdots , n\}$. The exponent is a sum over all pairs of distinct indices $(a,b)$ in the index set $\mathcal{S}_k$.
The coefficients $Z_k(n)$ have the same form as the canonical partition 
functions of a one-dimensional lattice gas with $n$ sites and $k$ particles, 
interacting by two-body potentials $\beta \varepsilon_{ij} = -\mbox{cov}(Z_i,Z_j)$.

The two-body interaction potentials for the lattice system for the two models considered 
can be read off from the covariance expressions (\ref{Zcov2}) and (\ref{Zcov}) as
\begin{eqnarray}\label{beps1}
&& \beta \varepsilon_{ij}^{\rm I} = 
- \frac{\sigma^2}{2\gamma} e^{-\gamma \tau |i-j|} \hspace{3cm}
\mbox{   (stationary OU process)}\\
\label{beps2}
&& \beta \varepsilon_{ij}^{\rm II} = 
- \frac{\sigma^2}{2\gamma} 
\left( e^{-\gamma \tau |i-j|} - e^{-\gamma \tau (i+j)}\right)\hspace{0.5cm}
\mbox{   (OU process started at zero)}
\end{eqnarray}

The precise definition of the $\beta$ factor will be chosen such that the 
lattice gas interactions are independent of the model parameters, and that 
the lattice gas has a well-defined thermodynamical limit $n \to \infty$.

The expectation $M_n$ is analogous to the grand partition 
function $\mathcal{Z}(\rho,T)$ of a lattice gas with $n$ sites, identifying 
$\rho$ with the fugacity of the lattice gas. This representation holds for any
random multiplicative model of the form (\ref{1}), driven by the exponential
of a Gaussian stochastic process \cite{JSP}. 

The thermodynamical analogy between the random multiplicative model and 
a lattice gas can be constructed by considering the grand canonical
potential at given temperature $T$ and fugacity $\rho$
\begin{eqnarray}
\Omega(\rho, T) = - T \log \mathcal{Z}(\rho, T) = - T \log \mathbb{E}[ x_n]\,.
\end{eqnarray}
The pressure of the lattice gas is
\begin{eqnarray}\label{pdef}
\lim_{n\to \infty} \frac{1}{n} \Omega(\rho,T) = - p(\rho,T)\,.
\end{eqnarray}

The relation
\begin{eqnarray}
d\Omega = - S dT - N d\mu
\end{eqnarray}
can be used to express  the entropy $S$ and number of particles $N$ 
by taking derivatives with respect to the appropriate intensive parameters.
Here $\mu = T \log\rho$ is the chemical potential.

The analog of the thermodynamical pressure (\ref{pdef}) in the random 
multiplicative process
is the exponential growth rate of the expectation $\mathbb{E}[x_n]$.
This  is the Lyapunov exponent, defined as
\begin{eqnarray}\label{plambda}
\lambda := \lim_{n\to \infty} \frac{1}{n} 
\log \mathbb{E}[ x_n ]
\end{eqnarray}
Assuming that the limit exists, the Lyapunov exponent is related to the 
pressure of the analog lattice gas as $\lambda = \frac{1}{T} p$. 
The existence of the limit is non-trivial, and is related to the existence
of the thermodynamical limit in the lattice gas with interactions
(\ref{beps1}), (\ref{beps2}).
This is discussed in the next section, where it is shown that this limit
exists provided that the model parameters $\sigma,\gamma,\tau$ are rescaled
with $n$ appropriately, see Eq.~(\ref{scaling}).

\begin{table}[t!]
\caption{\label{Table:0} 
Equivalence between thermodynamical quantities of the lattice gas and the
parameters of the stochastic growth model. The functional $\Lambda[f]$ and its argument $f(x)=g'(x)$ appear in the statement of Theorem~\ref{thm:OU}.}
\begin{center}
\begin{tabular}{|cc|}
\hline
Stochastic process & Lattice gas \\
\hline
$\mathbb{E}[x_n]$ & Grand partition function $\mathcal{Z}(\rho,\beta)$ \\
$\rho$ & Fugacity $e^{\beta\mu}$ \\
$\Lambda =\lim_{n\to \infty} \frac{1}{n}\log \mathbb{E}[x_n]$ 
   & Landau potential $-\beta \Omega$ \\
$\lambda(\rho,\beta)$ & $\beta p$ with $p$ the gas pressure \\
$f(x)=g'(x)$ & Gas density \\
\hline
\end{tabular}
\end{center}
\end{table}


The partial derivatives of $\mathbb{E}[ x_n]$ give the analogs of the particle
density in the lattice gas (average site occupation number)
\begin{eqnarray}
d = \frac{N}{n} = 
- \frac{1}{n} \left( \frac{\partial\Omega }{\partial\mu}\right)_T = \rho 
\left( \frac{\partial \log \mathbb{E}[ x_n]}{\partial \rho}\right)_T
\end{eqnarray}
and the entropy density (entropy per site)
\begin{eqnarray}
s(\rho,\beta) = \frac{1}{n}
\left( \frac{\partial T\log \mathbb{E}[ x_n]}{\partial T}\right)_{\mu} \,.
\end{eqnarray}

The entropy density can be expressed equivalently in terms of partial 
derivatives at fixed fugacity $\rho$ as
\begin{eqnarray}
s(\rho,\beta) &=& \frac{1}{n}
\left( \frac{\partial T\log \mathbb{E}[ x_n]}{\partial T}\right)_\rho +
\frac{1}{n} \rho \log\rho 
\left( \frac{\partial \log \mathbb{E}[ x_n]}{\partial \rho}\right)_T \\
&=&
\frac{1}{n}
\left( \frac{\partial T\log \mathbb{E}[x_n]}{\partial T}\right)_\rho +
d\log\rho  \,.\nonumber
\end{eqnarray}

The behavior of $d,s$ as functions of temperature at fixed fugacity $\rho$ was 
studied in \cite{RMP2} for the particular case of a stochastic driver given by a standard Brownian motion. 
For this case the grand partition function $\mathbb{E}[ x_n]$ could be computed exactly both for finite $n$ (by a recursion method) and in the thermodynamical limit 
$n\to \infty$ \cite{RMP2}.
In this paper we extend these results to the compounding process with 
growth rate given by the exponential of an Ornstein-Uhlenbeck process.

\section{The Lyapunov exponent}
\label{sec:3}

We give in this section an explicit result for the asymptotic growth rate of
the random multiplicative process (\ref{xndef})  in the limit $n\to \infty$ at fixed
\begin{eqnarray}\label{scaling}
\beta = \frac12 \sigma^2 \tau n^2\,,\qquad 
a = \gamma\tau n\,.
\end{eqnarray}

\begin{theorem}\label{thm:OU}
Assume that $x_n$ is a stochastic growth process defined by the recursion
$x_{n+1} = (1 + \rho e^{Z_n - \frac12 Var(Z_n)}) x_n$ driven by the stationary 
Ornstein-Uhlenbeck process $dZ_t = -\gamma Z_t dt + \sigma dW_t$ sampled on discrete times $t_i$ with time step $\tau = t_{i+1}-t_i$. 

The asymptotic growth rate of the expectation $M_n=\mathbb{E}[x_n]$
taken at fixed $\beta=\frac12\sigma^2 \tau n^2, a=\gamma n\tau$ exists and is 
given by solution of the variational problem
\begin{eqnarray}\label{LambdaLim}
&& \lambda(\rho,\beta,a) := \lim_{n\to \infty} \frac{1}{n} \log \mathbb{E}[x_n] =
\sup_{g\in \mathbb{G}} \Lambda[g] 
\end{eqnarray}
with
\begin{eqnarray}\label{LambdaOU}
\Lambda[g] 
 =  \log\rho g(1) + \beta \int_0^1 g'(y) g'(z) K(y,z) dy dz -
\int_0^1 I(g'(x)) dx \,.
\end{eqnarray}
The interaction kernel is
\begin{eqnarray}\label{Kdef}
K(y,z) = \frac{1}{2a} e^{-a|y-z|}
\end{eqnarray}
and the entropy function is $I(x) = x \log x + (1-x) \log(1-x)$. 
The supremum in (\ref{LambdaLim}) is taken over functions $g(x): [0,1] \to [0,1]$ in 
\begin{eqnarray}
\mathcal{G} = \{ g: [0,1] \to [0,1]| g(0) = 0\,, g \mbox{ absolutely continuous }, 
   0 \leq g'(x) \leq 1 \}
\end{eqnarray}
\end{theorem}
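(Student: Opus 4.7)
The plan is to evaluate the lattice-gas sum $M_n=\sum_{\mathcal{S}\subset\{1,\dots,n\}}\rho^{|\mathcal{S}|}\exp\!\bigl(\sum_{i<j\in\mathcal{S}}\mathrm{cov}(Z_i,Z_j)\bigr)$ by a discrete Laplace/saddle-point method. Inserting the scaling \eqref{scaling} into the stationary covariance gives $\mathrm{cov}(Z_i,Z_j)=(\beta/(an))\,e^{-a|i-j|/n}$, so the interactions are of order $1/n$ and live on the rescaled lattice $\{i/n\}\subset[0,1]$. To each configuration $\mathcal{S}$ I would attach its scaled cumulative counting function $g^{(n)}_{\mathcal{S}}(x)=n^{-1}\#(\mathcal{S}\cap\{1,\dots,\lfloor nx\rfloor\})$; after piecewise linear interpolation this is a nondecreasing $1$-Lipschitz map $[0,1]\to[0,1]$ with value $0$ at the origin, hence an element of $\mathcal{G}$. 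The class $\mathcal{G}$ is compact in the uniform topology by Arzelà--Ascoli (equicontinuity from $0\le g'\le 1$), which provides the natural framework for a variational limit.

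\textbf{Energy and entropy estimates.}
Two quantitative inputs are needed. First, an \emph{energy identity}: a direct Riemann-sum computation using $K(y,z)=\tfrac{1}{2a}e^{-a|y-z|}$ yields
\[
\sum_{i<j\in\mathcal{S}}\mathrm{cov}(Z_i,Z_j)=n\beta\iint_{[0,1]^2}g'(y)g'(z)K(y,z)\,dy\,dz + O(1),
\]
with $g=g^{(n)}_\mathcal{S}$, the $O(1)$ absorbing the diagonal $y=z$ contribution. Together with $\rho^{|\mathcal{S}|}=\exp(n\,g(1)\log\rho)$ this identifies the summand as $\exp\!\bigl(n[g(1)\log\rho+\beta\iint g'g'K]+O(1)\bigr)$. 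Second, an \emph{entropic count}: partitioning $\{1,\dots,n\}$ into blocks of length $n\delta$, the number of $\mathcal{S}$ whose counting function lies within uniform distance $\varepsilon$ of a fixed $g\in\mathcal{G}$ is $\exp\!\bigl(-n\int_0^1 I(g'(x))\,dx+o_{\varepsilon,\delta}(n)\bigr)$, by a product of Stirling estimates applied to the per-block binomial coefficients, with $\delta\to 0$ taken after $n\to\infty$. Combining the two, the total contribution from configurations near $g$ is $\exp(n\Lambda[g]+o_\varepsilon(n))$.

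\textbf{Matching bounds.}
The upper bound $\limsup_n n^{-1}\log M_n\le\sup_{\mathcal{G}}\Lambda$ then follows from a finite $\varepsilon$-cover of $\mathcal{G}$ whose cardinality is subexponential in $n$, since $M_n$ is dominated by the maximum class contribution times this cover size. For the matching lower bound, given any $g\in\mathcal{G}$ I would explicitly realise a near-optimal family $\mathcal{S}_n$ by placing $\lfloor n\delta\,g'(x_k)\rfloor$ sites uniformly inside the $k$-th block; the contribution of the resulting equivalence class already attains $\exp(n\Lambda[g]-o(n))$ once the per-block multiplicities are summed. Taking $\sup_g$ and letting $\varepsilon\to 0$ gives $\liminf_n n^{-1}\log M_n\ge\sup_{\mathcal{G}}\Lambda$, closing the variational identity.

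\textbf{Main obstacle.}
The delicate point is reconciling the two pieces of $\Lambda$: the functional depends on $g'$ rather than on $g$ itself, while $\mathcal{G}$ is compact only in the uniform topology on $g$. I expect the technical core of the argument to lie in choosing an intermediate mesh $1\ll n\delta\ll n$ on which both the binomial count (which captures the entropy $\int I(g')$) and the Riemann approximation (which captures the quadratic form $\iint g'g'K$) become uniform. The smoothness of $K$ off the diagonal allows one to quantify how the energy term depends on $g^{(n)}_\mathcal{S}$ in the uniform topology -- essentially by integration by parts shifting the derivatives off $g'$ -- and to verify that the resulting errors are of the same order as those in the entropy count, so that the two match at the common exponential level $\exp(n\Lambda[g])$.
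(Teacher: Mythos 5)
Your proposal is correct in outline and arrives at the same variational formula, but by a genuinely different route. The paper also starts from the lattice-gas sum \eqref{sumrho}, but it organizes the sum over subsets probabilistically by inserting i.i.d.\ Bernoulli variables $Y_i\in\{0,1\}$, takes the Gaussian expectation over the O--U increments in closed form to expose a quadratic functional of the empirical path $g^{(n)}(x)=n^{-1}\sum_{i\le nx}Y_i$, and then cites Mogulskii's sample-path LDP and Varadhan's lemma as packaged theorems. You instead skip the randomization, treat the covariance sum directly as a Riemann sum (your energy identity), and reprove the two large-deviations ingredients from scratch: the Mogulskii rate via block-wise Stirling counts, and Varadhan's lemma via an $\varepsilon$-cover of $\mathcal{G}$ plus an explicit near-optimal configuration. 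Your route is more elementary and self-contained; its price is that you must establish by hand the uniformity that the cited theorems package away: the block-entropy estimate must hold uniformly over the cover, the per-block rounding errors in your lower-bound construction must be $o(n)$, and --- the obstacle you correctly identify --- the energy functional must be shown continuous in the uniform topology on $g$ rather than on $g'$. The paper handles that last point in a single sentence, by observing that $g\mapsto \log\rho\,g(1)+\beta\int_0^1\!\int_0^1 K(y,z)g'(y)g'(z)\,dy\,dz$ is bounded and continuous on $L^\infty[0,1]$ (provable, as you anticipate, by integration by parts against the Lipschitz kernel $K$), which is exactly the hypothesis Varadhan's lemma requires. A clean compromise would be to keep your direct energy identity and simply invoke Mogulskii and Varadhan at the stage where you currently propose the block and covering argument.
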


\begin{proof}
The proof is given in Appendix~\ref{sec:app}.
\end{proof}

The functional $\Lambda$ has a simple physical interpretation as the Landau potential
(grand potential) of a gas of particles in one dimension bounded to the region $x\in [0,1]$
interacting by two-body potentials $V(x,y) = -2K(x,y)$
and maintained at temperature $T=1/\beta$ and chemical potential $\mu\beta = \log\rho$
(equivalently $\rho=e^{\beta\mu}$ is the fugacity)
$$\Lambda[g] \to - \beta \Omega = \mu \beta N - \beta U + S$$
The three terms can be read off from the expression in (\ref{LambdaOU}):

\begin{itemize}

\item $N$ the particle number 
\begin{equation}
N=\int_0^1 f(x) dx
\end{equation}

\item $U$ interaction energy
\begin{equation}
U = -  \int_0^1 K(y,z) f(y) f(z) fy dz = - \frac12 \int_0^1 V(y,z) dy dz
\end{equation}

\item $S$ the entropy of the gas
\begin{equation}
S = - \int_0^1 I(f(x)) dx\,.
\end{equation}

\end{itemize}

\begin{remark}
The result of Theorem~\ref{thm:OU} can be extended to the compounding process driven by an 
OU process started at zero $Z_0=0$ by replacing $K(y,z)\to K_2(y,z)$ in the functional $\Lambda[g]$, with  
\begin{eqnarray}\label{K2def}
K_2(y,z) := \frac{1}{2a} (e^{-a|y-z|} - e^{-a(y+z)}) \,.
\end{eqnarray}

\end{remark}

\begin{proof}
This follows by taking $\mbox{var}(Z_0) = 0$ in $\bar S_n$, see Eq.~(\ref{Sbar}), which has
the effect of dropping the second term in (\ref{zeta}) 
(proportional to $\frac{\beta}{2a}$).
\end{proof}

 \section{General properties and bounds on the Lyapunov exponent}
 \label{sec:4}
 
We discuss here some general properties of the Lyapunov exponent 
$\lambda(\rho,\beta)$ which are immediate consequences of Theorem~\ref{thm:OU} and do not require the solution of the variational problem (\ref{LambdaOU}). 

\begin{proposition}\label{prop:bounds}
Assume the stochastic growth process $x_n$ defined in the statement of  Theorem~\ref{thm:OU}. 

(i) The $\beta\to 0$ limit of the Lyapunov exponent is 
$$ \lambda(\rho,0) = \log(\rho+1)$$

(ii) The Lyapunov exponent is bounded from above and below as
\begin{equation}
\bar\lambda(\rho,\beta) \geq \lambda(\rho, \beta) \geq \underline{\lambda}(\rho,\beta) 
\end{equation}
with 
\begin{eqnarray}\label{UBstrong}
&& \bar\lambda(\rho,\beta):=
\sup_{0\leq x \leq 1} \{ \log\rho x + \lambda_0(a) \beta x^2 - I(x) \} \\
\label{LBstrong}
&& \underline{\lambda}(\rho,\beta):=
\sup_{0\leq x \leq 1} \{ \log\rho x + k(a) \beta x^2 - I(x) \} \,.
\end{eqnarray}
where $\lambda_0(a)$ is the largest eigenvalue of the integral equation
$\int_0^1 K(x,y) f(y) dy = \lambda f(x)$ and is given in Eq.~(\ref{lambda0}), and
\begin{equation}
k(a) = \int_0^1 K(y,z) dy dz = \frac{1}{a^3}(a + e^{-a}-1)\,.
\end{equation}

\end{proposition}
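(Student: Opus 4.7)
The plan is to reduce each claim to an elementary variational argument applied to the functional $\Lambda[g]$ of Theorem~\ref{thm:OU}. Setting $f=g'$ so that $f:[0,1]\to[0,1]$ is measurable with $g(1)=\int_0^1 f(x)\,dx$, one may rewrite
\[
\Lambda[g] = \log\rho\int_0^1 f(x)\,dx + \beta\int_0^1\!\!\int_0^1 K(y,z) f(y) f(z)\,dy\,dz - \int_0^1 I(f(x))\,dx.
\]
For part (i) I would set $\beta=0$, whereupon the quadratic term vanishes and the supremum decouples pointwise in $x$. Maximizing $h_0(u):=u\log\rho - I(u)$ over $u\in[0,1]$ via $h_0'(u)=0$ produces the interior optimum $u_*=\rho/(1+\rho)$, at which $h_0(u_*)=\log(1+\rho)$; the constant density $f\equiv u_*$ is realized by the admissible $g(x)=u_* x\in\mathcal{G}$, so the value is attained and $\lambda(\rho,0)=\log(1+\rho)$.

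For the upper bound in (ii), I would exploit that $K(y,z)=\tfrac{1}{2a}e^{-a|y-z|}$ is a continuous, symmetric, positive semidefinite kernel on $[0,1]^2$ and therefore defines a compact self-adjoint operator on $L^2([0,1])$ with discrete nonnegative spectrum (Mercer's theorem); let $\lambda_0(a)$ be its largest eigenvalue. The Rayleigh inequality
\[
\int_0^1\!\!\int_0^1 K(y,z) f(y) f(z)\,dy\,dz \le \lambda_0(a)\int_0^1 f(x)^2\,dx,
\]
combined with $\beta\ge 0$, yields $\Lambda[g]\le\int_0^1 \bar h(f(x))\,dx$ with $\bar h(u):=u\log\rho + \lambda_0(a)\beta u^2 - I(u)$. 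Since $f(x)\in[0,1]$ almost everywhere, the integrand is dominated pointwise by $\sup_{u\in[0,1]}\bar h(u)=\bar\lambda(\rho,\beta)$, and taking the supremum over $g\in\mathcal{G}$ completes this half.

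For the lower bound I would restrict the supremum in (\ref{LambdaLim}) to the one-parameter family of linear trial functions $g_x(y)=xy$ for $x\in[0,1]$; each lies in $\mathcal{G}$ with constant derivative $g_x'\equiv x$. Direct substitution gives
\[
\Lambda[g_x] = x\log\rho + \beta x^2\!\int_0^1\!\!\int_0^1 K(y,z)\,dy\,dz - I(x) = x\log\rho + k(a)\beta x^2 - I(x),
\]
where an elementary double integration produces $k(a)=\tfrac{1}{a^3}(a+e^{-a}-1)$. Since $\lambda\ge\Lambda[g_x]$ for every admissible $x$, taking the supremum over $x\in[0,1]$ yields $\lambda\ge\underline\lambda(\rho,\beta)$.

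The substantive step I would flag as least routine is the spectral inequality underlying the upper bound, which requires the Mercer/Hilbert--Schmidt machinery to guarantee a discrete nonnegative spectrum and a largest eigenvalue dominating the quadratic form; by contrast, the lower bound and part (i) reduce to one-variable calculus via insertion of suitable trial functions, together with the pointwise bound $\int_0^1 \bar h(f)\,dx\le\sup_{[0,1]}\bar h$ available whenever $f$ is $[0,1]$-valued.
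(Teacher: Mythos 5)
Your proof is correct and follows essentially the same strategy as the paper's: the Rayleigh-quotient bound $\int K\,ff \le \lambda_0\int f^2$ for the upper bound, and linear trial functions $g(y)=xy$ for the lower bound, with $k(a)$ computed by the same elementary double integral. The one place your argument genuinely differs is in how you conclude the upper bound: after applying the spectral inequality, you observe that the resulting expression is $\int_0^1 \bar h(f(x))\,dx$ with $\bar h(u)=u\log\rho+\lambda_0(a)\beta u^2 - I(u)$, and bound it pointwise by $\sup_{[0,1]}\bar h$ since the interval has length one. The paper instead reaches the same conclusion by writing down the Euler--Lagrange equation for the reduced variational problem and arguing that its solution is a constant $f\equiv d$; your pointwise bound is more direct and avoids the (unaddressed in the paper) question of whether the supremum is actually attained at a critical point of the EL equation. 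Similarly for part (i) the paper invokes Jensen's inequality on the convex function $I$ to pull it outside the integral, while you bound $\int_0^1 h_0(f)\,dx$ pointwise by $\sup h_0$; these are equivalent in substance. In short: same skeleton, but your finishing step for the bounds is a cleaner replacement for the paper's variational-calculus appeal.
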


\begin{remark}
These results have similar counterparts for the compounding process driven by a
OU process started at zero $Z_0=0$, which are obtained by replacing
\begin{equation}
k(a) \to k_2(a) := \frac{1}{2a^3} \left( 1 + 2a - (2 - e^{-a})^2\right) < k(a) \,.
\end{equation}
As $a \to 0$ the function $k_2(a)$ has the expansion
\begin{equation}
k_2(a) = \frac13 - \frac14 a + O(a^2)
\end{equation}
which reproduces the value $\frac13$ in the limiting case of compounding process driven by a standard Brownian motion. The largest eigenvalue of the equation
$\int_0^1 K_2(y,z) f(z) dz = \lambda f(y)$ is given in equation (\ref{LEV2}).
\end{remark}

\begin{proof}[Proof of Proposition~\ref{prop:bounds}]

(i) Using the convexity property of $I(x)$ we have from Theorem~\ref{thm:OU} the upper bound
\begin{eqnarray}
&& \lambda(\rho,0) = \sup_{g \in \mathcal{G}} \{ \log \rho g(1) - \int_0^1 I(g'(x)) dx \} \\
&&\qquad  \leq  \sup_{0 \leq g(1) \leq 1} \{ \log \rho g(1) -  I(g(1)) \} = \log(\rho+1) \nonumber
\end{eqnarray}
We obtain also a matching lower bound  by choosing $g(x)=x g(1)$
\begin{equation}
\lambda(\rho,0) \geq \sup_{g\in \mathcal{G}} \{ \log\rho g(1) - I(g(1)) \} = \log(\rho+1)
\end{equation}
This concludes the proof of point (i).

(ii) The upper bound $\bar\lambda(\rho,\beta)$ 
follows from the upper bound on the double integral in $\Lambda[g]$
\begin{eqnarray}\label{Kbound}
&& \left| \int_0^1 K(y,z) g'(y) g'(z) dy dz \right| \leq \lambda_0(a) \int_0^1 [g'(y)]^2 dy 
\end{eqnarray}
where $\lambda_0(a)$ the largest eigenvalue of the integral equation
\begin{equation}\label{integralEq}
\int_0^1 K(y,z) f(y) dy = \lambda f(z) \,,\quad K(y,z) = \frac{1}{2a} e^{-a|y-z|}\,.
\end{equation}
This inequality  follows from the maximal 
property of symmetric positive kernels $K(x,y)$
\begin{equation}
\int_0^1 K(x,y) \varphi(x) \varphi(y) dx dy \leq \lambda_0 \int_0^1 \varphi(x)^2 dx
\end{equation}
where $\lambda_0$ is the largest eigenvalue of the integral equation (\ref{integralEq}). See Ch.~III, \S 4.1 in Courant and Hilbert vol. 1 \cite{CourantHilbert}.


The integral equation (\ref{integralEq}) was solved by Mark Kac in \cite{KacBarriers} (see Sec.~4 in this paper). For convenience of reference we state the result and provide a proof below in Proposition~\ref{prop:Kac}. 
The largest eigenvalue of the integral equation (\ref{integralEq}) is 
\begin{equation}\label{lambda0}
\lambda_0(a)  = \frac{1}{a^2(1+y_0^2)}
\end{equation}
where $y_0$ is the smallest non-zero root of the equation
\begin{equation}\label{yeq}
y \tan\left(\frac12 a y\right) = 1
\end{equation}


The inequality (\ref{Kbound}) gives an upper 
bound on the functional $\Lambda[g]$
\begin{equation}
\Lambda[g] \leq \log\rho g(1) + \beta \lambda_0(a) \int_0^1 [g'(x)]^2 dx
- \int_0^1 I(g'(x)) dx \,.
\end{equation}

The supremum of the expression on the right-hand side is realized on a function of the form $g(x) = x g(1)$. This follows by expressing the variational problem in terms of $f(x) := g'(x)$ and noting that the Euler-Lagrange equation for $f(x)$ reads 
$\log \rho + 2\beta\lambda_0(a) f(x) = \log\frac{f(x)}{1-f(x)}$. This equation can be written equivalently as $\rho = \frac{d}{1-d} e^{2\beta\lambda_0(a) d}$ which can be recognized as the equation for the density $d$ of a lattice van der Waals gas with coupling $\lambda_0(a)$ at fugacity $\rho$ and inverse temperature $\beta$.
The solution of this equation is a constant $f(x) = d$. An explicit result for $d$ is given below in Proposition~\ref{prop:CW}.

In conclusion we have the upper bound on the Lyapunov exponent
\begin{equation}
\lambda(\rho,\beta) = \sup_g \Lambda[g] \leq \sup_{0 \leq x \leq 1}
\left\{
\log \rho x + \beta \lambda_0(a) x^2 - I(x) \right\} := \bar\lambda(\rho,\beta)\,.
\end{equation}

The lower bound $\underline{\lambda}(\rho,\beta)$ is obtained by taking $g(x) = x g(1)$ in the variational problem (\ref{LambdaLim}).
\end{proof}

\begin{table}[b!]
\caption{\label{Table:0b} 
Numerical values of the constants associated with the upper and lower bounds on the 
Lyapunov exponent of Proposition \ref{prop:bounds} for model 1 (stationary OU process), and model 2 (OU process started at zero). 
}
\begin{center}
\begin{tabular}{|c|ccc||ccc|}
\hline
       & \multicolumn{3}{|c|}{Model 1} & \multicolumn{3}{|c|}{Model 2} \\
       \hline
$a$ & $y_0(a)$  & $\lambda_0(a)$ & $k(a)$ 
       & $y_0^{(2)}(a)$  & $\lambda_0^{(2)}(a)$ & $k_2(a)$ \\
\hline
\hline
0.1 & 4.43521 & 4.83768 & 4.83742 
      & 16.3199 & 0.374055 & 0.30946 \\
0.5 & 1.92038 & 0.853269 & 0.852245 
      &  3.6732 & 0.276008 & 0.232973 \\
1.0 & 1.30654 & 0.369405 & 0.36787 
      & 2.0288 & 0.195471 & 0.168091 \\
2.0 & 0.860334 & 0.143664 & 0.141917  
      & 1.1445 & 0.108235 & 0.095189  \\
3.0 & 0.658827 & 0.07748 & 0.075918 
      & 0.8185 & 0.066533 & 0.059198  \\
\hline
\end{tabular}
\end{center}
\end{table}

A tabulation of the parameters $y_0(a),\lambda_0(a), k(a)$ for the model with stationary O-U process (model 1) and their analogs for the model with O-U 
process started at zero (model 2) is given in Table~\ref{Table:0b}.

We give also weaker bounds which are the analogs of the bounds in point (ii) of Proposition 2 in \cite{JSP}.

\begin{remark}
We have the weaker bounds on the Lyapunov exponent 
\begin{equation}\label{UBweak}
k(a) \beta + \log(\rho+1) >
\bar\lambda(\rho,\beta) \geq \lambda(\rho,\beta)
\end{equation}
and 
\begin{equation}\label{LBweak}
\lambda(\rho,\beta) \geq \underline{\lambda}(\rho,\beta) > k(a) \beta + \log\rho
\end{equation}
\end{remark}

\begin{proof}
The weaker upper bound is obtained by bounding the integral in $\Lambda[g]$ as
\begin{eqnarray}
&& \left|\int_0^1 K(y,z) g'(y) g'(z) dy dz \right| \leq \int_0^1 |K(y,z)| |g'(y) | | g'(z) | dy dz \\
&&\qquad \leq \int_0^1 |K(y,z)| dy dz
= k(a) \,. \nonumber
\end{eqnarray}
Using again Jensen's inequality for the integral $\int_0^1 I(g'(x)) dx \geq I(\int_0^1 g'(x) dx) = I(g(1))$
we have
\begin{eqnarray}
\lambda(\rho,\beta) \leq \log\rho g(1) +k(a) \beta - I(g(1)) \leq \log\rho g(1) - I(g(1))
\end{eqnarray}
The lower bound follows by taking $g(x)=x$.

\end{proof}


\subsection{Two integral equations}
\label{sec:4.1}

We discuss in this section the integral equations associated with the upper bounds on the Lyapunov exponent for models with stationary OU process $Z_t$ and OU process started at zero $Z_0=0$. They both have the form
\begin{equation}
\int_0^1 K(y,z) f(y) dy = \lambda f(z)
\end{equation}
where the kernel has the form (\ref{Kdef}) for the stationary OU process and (\ref{K2def}) for the OU process started at zero.

We start with the stationary OU case. The integral equation for this case was solved by Kac in \cite{KacBarriers}. For convenience of the reader we quote the full result
and sketch the proof.

\begin{proposition}[Kac \cite{KacBarriers}]\label{prop:Kac}
Consider the integral equation
\begin{equation}\label{inteq1}
\int_0^1 e^{-a|y-x|} f(y) dy = \eta f(x)
\end{equation}
The eigenvalues are $\eta_j = \frac{2}{(1+y_j^2)a}$ where $\{y_j\}_{j\geq 0}$
are the non-zero solutions of the equation
\begin{equation}\label{eqy}
y \tan\left(\frac12 ay\right) = 1\,.
\end{equation}
The largest eigenvalue $\eta_0$ corresponds to the smallest non-zero
solution of the equation (\ref{eqy}).
\end{proposition}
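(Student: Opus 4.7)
The plan is to convert the Fredholm integral equation (\ref{inteq1}) into a two-point boundary value problem for a second-order ODE, a standard trick available because $e^{-a|y-x|}$ is piecewise smooth with a simple jump in its $x$-derivative at $y=x$. Splitting the integral at $y=x$ and differentiating twice in $x$, the Leibniz boundary terms from the two pieces cancel pairwise and one obtains the local identity $\eta f''(x) = (a^2\eta - 2a) f(x)$. Hence every eigenfunction satisfies $f'' + \omega^2 f = 0$ with $\omega^2 = 2a/\eta - a^2$.

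To determine the admissible $\omega$, I would read off two boundary conditions by evaluating the integral equation and its first derivative at $x=0$ and $x=1$. From
\[ \eta f'(x) = -a\int_0^x e^{-a(x-y)} f(y) dy + a \int_x^1 e^{-a(y-x)} f(y) dy \]
one obtains $f'(0) = a f(0)$ and $f'(1) = -a f(1)$. Writing the general solution as $f(x) = A\cos(\omega x) + B\sin(\omega x)$, the left condition forces $B = aA/\omega$, and substituting into the right condition and clearing $\omega$ yields the characteristic equation $(a^2 - \omega^2)\sin\omega + 2a\omega\cos\omega = 0$.

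Setting $y := \omega/a$ rewrites this as $(1-y^2)\tan(ay) = -2y$. Applying the tangent double-angle identity with $t := \tan(ay/2)$ factors the equation as $(y+t)(yt-1) = 0$. The eigenvalue is $\eta = 2/[a(1+y^2)]$, so the largest $\eta$ corresponds to the smallest positive $y$. On $(0, \pi/a)$ the branch $\tan(ay/2) = -y$ has no root since its two sides have opposite signs there, while $y\tan(ay/2) = 1$ runs monotonically from $0$ to $+\infty$ and thus has a unique root $y_0 \in (0, \pi/a)$. A direct computation using $\arctan(1/y_0) = ay_0/2$ shows that the associated eigenfunction can be written $f(x) = R\cos(ay_0(x - \tfrac12))$, which has no zero on $[0,1]$ because $\pi/(2 a y_0) > \tfrac12$. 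By Jentzsch's theorem for strictly positive continuous kernels, this nodeless eigenfunction is the Perron eigenfunction, and $\eta_0 = 2/[a(1+y_0^2)]$ is indeed the top of the spectrum.

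The main obstacle is verifying that the two differentiations plus the endpoint evaluations are not only necessary but together sufficient, so that the spectrum of the ODE boundary value problem coincides exactly with that of (\ref{inteq1}) and no eigenvalues are gained or lost. A secondary care point is the clean separation of the two branches of the transcendental equation: only $y\tan(ay/2)=1$ contributes the smallest positive root, and it is positivity (hence simplicity) of the top eigenvalue that justifies selecting precisely this branch.
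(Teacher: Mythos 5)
Your route is essentially the paper's route (differentiate the kernel twice to get a second-order ODE, read off mixed boundary conditions, solve the characteristic determinant), but you carry it out more carefully, and in doing so you actually expose a gap in the proposition as stated. Your factorization $(y+t)(yt-1)=0$ with $t=\tan(ay/2)$ is correct: the characteristic equation $(1-y^2)\tan(ay)=-2y$ is equivalent to the pair $y\tan(ay/2)=1$ \emph{and} $\tan(ay/2)=-y$, and \emph{both} branches produce genuine eigenfunctions of (\ref{inteq1}). The first gives eigenfunctions proportional to $\cos\bigl(ay(x-\tfrac12)\bigr)$ (even about the midpoint), the second gives eigenfunctions proportional to $\sin\bigl(ay(x-\tfrac12)\bigr)$ (odd about the midpoint); the two families of roots interleave, with the smallest positive root coming from the even branch exactly as you argue. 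So the proposition's opening sentence (``The eigenvalues are $\eta_j=\dots$ where $\{y_j\}$ are the non-zero solutions of $y\tan(ay/2)=1$'') omits the odd family; your derivation is the correct and complete one, and the claim about $\eta_0$ — which is all the paper actually uses downstream, via (\ref{lambda0}) — stands.

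On the ``sufficiency'' concern you flag: it is real, because the paper's proof (which substitutes the general ODE solution back into the full integral equation rather than just the endpoint conditions) effectively sidesteps it, while your shortcut through $f'(0)=af(0)$, $f'(1)=-af(1)$ does not by itself establish that every ODE–BVP solution solves (\ref{inteq1}). It does, and the standard one-line fix is worth including: set $g(x)=\int_0^1 e^{-a|x-y|}f(y)\,dy$ and $h=g-\eta f$; then $g''-a^2g=-2af$ and $f''=-\omega^2 f$ with $\eta=2a/(a^2+\omega^2)$ give $h''=a^2h$, while the endpoint identities $g'(0)=ag(0)$, $g'(1)=-ag(1)$ (which hold for \emph{any} $f$) combined with your two boundary conditions on $f$ force $h'(0)=ah(0)$ and $h'(1)=-ah(1)$, and the only function $C_1e^{ax}+C_2e^{-ax}$ satisfying these is $h\equiv 0$. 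Finally, the Jentzsch argument is a nice independent check of the ordering of the branches, though once you have located $y_0\in(0,\pi/a)$ below the smallest root of the odd branch it is strictly speaking redundant.
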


The eigenvalues of the integral equation $\int_0^1 K(y,z) f(y) dy = \lambda f(z)$ 
with the kernel $K(y,z) = \frac{1}{2a} e^{-|y-z|}$ are obtained by a simple rescaling 
as $\lambda = \frac{1}{2a} \eta$.

\begin{proof}
Change variables as $u = a y$ and $v = a x$. 
Denoting $f(a v) = h(v)$, the integral equation becomes 
\begin{equation}
\int_0^a e^{-|u-v|} h(u) du = a \eta h(v) \,.
\end{equation}
Taking one derivative with respect to $v$ gives
\begin{equation}\label{eqd1}
-\int_0^v e^{-(v-u)} h(u) du + \int_v^a e^{-(u-v)} h(u) du = a \eta h'(v)\,.
\end{equation}
Taking a second derivative gives
\begin{equation}\label{eqd2}
-2 h(v) + \int_0^a e^{-|v-u|} h(u) du = (a \eta - 2) h(v) = a \eta h''(v) \,.
\end{equation}
This equation has the general solution $h(v) = c_1 \sin(y u) + c_2 \cos(y u)$. Upon substitution into (\ref{eqd2}) the eigenvalue is expressed as $a\eta = \frac{2}{1+y^2}$. 

It remains to determine the coefficients $c_{1,2}$ and the constant $y$.
This is done by requiring that $h(v)$ satisfies the original integral equation,
which gives two homogeneous linear equations for $c_{1,2}$. 
Requiring that the determinant of this system vanishes gives the solution
\begin{equation}\label{hsol}
h(u) = \sin(y u) + y \cos(y u)
\end{equation}
where $y$ satisfies the equation (\ref{eqy}).




\end{proof}

\textbf{OU process started at zero.}
Similar results are obtained for the eigenvalues of the kernel $K_2(y,z)$.

\begin{proposition}\label{prop:Kac2}
Consider the integral equation
\begin{equation}\label{inteq2}
\int_0^1 \left( e^{-a|y-x|} - e^{-a(y+x)} \right) f(y) dy = \eta f(x)\,.
\end{equation}

The eigenvalues are $\eta_j = \frac{2}{(1+y_j^2)a}$ where
$\{y_j\}_{j\geq 0}$
are the non-zero solutions of the equation
\begin{equation}\label{eqy2}
\tan\left(ay\right) + y = 0\,.
\end{equation}

The largest eigenvalue $\eta_0$ corresponds to the smallest solution of the equation (\ref{eqy2}).
\end{proposition}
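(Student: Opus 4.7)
My plan is to follow the same strategy used in the proof of Proposition~\ref{prop:Kac}, adapting it to handle the extra term $-e^{-a(y+x)}$ in the kernel. First I would rescale by setting $u = ay$, $v = ax$, $h(v) = f(v/a)$, which turns the integral equation into
\begin{equation*}
\int_0^a \bigl(e^{-|u-v|} - e^{-(u+v)}\bigr) h(u)\, du = a\eta\, h(v).
\end{equation*}
The second piece of the kernel factorizes as $e^{-v}\cdot e^{-u}$, which is the structural simplification that makes the computation tractable.

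Next I would differentiate twice in $v$. The first term produces the same derivative structure as in the Kac calculation: the second derivative of $\int_0^a e^{-|u-v|}h(u)\,du$ equals itself minus $2h(v)$. The separable term $e^{-v}\int_0^a e^{-u}h(u)\,du$ satisfies the ODE $g'' = g$ trivially. Combining, the right-hand side $a\eta\, h(v)$ obeys
\begin{equation*}
a\eta\, h''(v) = (a\eta - 2)\, h(v),
\end{equation*}
exactly as in Proposition~\ref{prop:Kac}. Hence $h(v) = c_1\sin(yv) + c_2\cos(yv)$ with the dispersion relation $a\eta = 2/(1+y^2)$, which already gives the claimed form of the eigenvalues.

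The remaining task, and where the new kernel makes itself felt, is to extract the correct transcendental equation from the two boundary conditions. Evaluating the original integral equation at $v=0$, the kernel $e^{-|u|}-e^{-u}$ vanishes identically on $[0,a]$, so $a\eta\,h(0) = 0$ and hence $h(0) = 0$, forcing $c_2 = 0$. For the second condition I would compute the first derivative of the integral equation at $v = a$: the standard piece contributes $-\int_0^a e^{-(a-u)}h(u)\,du = -I_1(a)$, while $\partial_v(e^{-v}\int_0^a e^{-u}h(u)\,du)\big|_{v=a} = -I_2(a)$. Adding these and using $I_1(a)-I_2(a) = a\eta\,h(a)$ yields $a\eta\, h'(a) = -a\eta\, h(a)$, i.e.\ $h'(a) + h(a) = 0$. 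Substituting $h(v) = c_1\sin(yv)$ gives $y\cos(ay) + \sin(ay) = 0$, equivalently $\tan(ay) + y = 0$, which is equation (\ref{eqy2}).

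The main point I expect to need care on is the boundary condition at $v=0$: in the stationary case the kernel at coincident point gives a mixed Robin-type condition, whereas here the subtraction $e^{-|y-x|} - e^{-(y+x)}$ vanishes at the origin and enforces a clean Dirichlet condition, which is exactly why only one of the two trigonometric modes survives. Finally, since $\eta = 2/(a(1+y^2))$ is strictly decreasing in $y > 0$, the largest eigenvalue $\eta_0$ corresponds to the smallest positive root of $\tan(ay) + y = 0$, completing the proof.
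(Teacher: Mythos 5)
Your proof is correct and follows the same route as the paper: rescale, differentiate twice to get $a\eta\,h'' = (a\eta-2)h$, then pin down the trigonometric solution by substituting back into the integral equation. You make explicit the two boundary conditions ($h(0)=0$ from the vanishing of the kernel at the origin, and $h'(a)+h(a)=0$ from the derivative at the right endpoint) that the paper leaves implicit behind the phrase "substituting the general solution into the integral equation gives $c_2=0$," which is a useful clarification but not a different method.
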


A simple rescaling gives an explicit result for the largest eigenvalue of the integral equation $\int_0^1 K_2(y,z) f(y) dy = \lambda f(z)$
\begin{equation}\label{LEV2}
\lambda_0^{(2)}(a) = \frac{1}{a^2(1 + y_0^{(2)}(a) )}
\end{equation}
where $y_0^{(2)}(a)$ is the smallest root of the equation (\ref{eqy2}).

\begin{proof}
The proof is similar to that of Proposition~\ref{prop:Kac}.
Changing variables as $u = a y$ and $v = a x$ and denoting $f(a v) = h(v)$, we get that $h(v)$ satisfies the same differential equation $(a \lambda - 2) h''(v) = a\lambda h(v)$. The most general solution is $f(x) = c_1 \sin(y a x ) + c_2 \cos(y a x)$ where $\lambda = \frac{2}{a(1+ y^2)}$. 

Substituting the general solution into the integral equation (\ref{inteq2}) 
gives that $c_2=0$ and the solution must have the form 
\begin{equation}
f(x) = C \sin(y a x )
\end{equation}
where the parameter $y$ satisfies the equation (\ref{eqy2}).

\end{proof}

\subsection{Large mean-reversion limit $a \gg 1$}
\label{sec:4.2}

Let us study the asymptotics of $k(a)$ and $\lambda_0(a)$ as $a \gg 1$, which corresponds to the large mean-reversion limit. Recall that the $a$ parameter is related to the mean-reversion parameter $\gamma$ as $a=\gamma n\tau$.

The parameter $k(a)$ appearing in the lower bound of Proposition~\ref{prop:bounds} has the expansion
\begin{equation}\label{kexp}
k(a) = \frac{1}{a^2} - \frac{1}{a^3} + O(e^{-a})\,, \quad a \to \infty
\end{equation}

The parameter $\lambda_0(a)$ appearing in the upper bound has the expansion
\begin{equation}\label{lam0exp}
\lambda_0(a) = \frac{1}{a^2} - \frac{\pi^2}{a^4} + O(a^{-5}) \,, a\to \infty
\end{equation}
This follows from the expansion of the solution of the equation (\ref{yeq}) in the $a\to \infty$ limit
\begin{equation}
y_0(a) = \frac{\pi}{a} - \frac{2\pi}{a(a+2)} + O(a^{-3})
\end{equation}
Substituting into (\ref{lambda0}) gives the expression quoted above.

Comparing (\ref{lam0exp}) and (\ref{kexp}) we note that the constants associated with the upper and lower bounds on $\lambda(\rho,\beta)$ in Proposition \ref{prop:bounds} have the same $a\to \infty$ asymptotics. This implies that the bounds have the same large-$a$ asymptotics and their difference falls off faster than their magnitude. 

This behavior is observed also numerically in Table~\ref{Table:0b} which tabulates $k(a), \lambda_0(a)$ for several values of $a$. This property will play an important
role in Section~\ref{sec:6}, and will be used to show that the large mean-reversion limit corresponds to the van der Waals limit in the analog lattice gas system.

A similar result holds for $\lambda_0^{(2)}(a),k_2(a)$. We have
\begin{equation}\label{k2exp}
k_2(a) = \frac{1}{a^2} - \frac{3}{2a^3} + O(e^{-a})\,. \quad a \to \infty
\end{equation}
A graphical representation of the equation (\ref{eqy2}) shows that the smallest solution is bounded as 
$\frac{\pi}{2a} < y_0^{(2)} < \frac{\pi}{a}$. Thus, as $a\to \infty$ we have $y_0^{(2)}\to 0$. In conclusion we have
\begin{equation}
\lim_{a\to \infty} a^2 \lambda_0^{(2)} = 
\lim_{a\to \infty} a^2 k_2(a) = 1\,.
\end{equation}

\subsection{Mean-field approximation}
\label{sec:4.3}

The lower and upper bounds in Eqs.~(\ref{LBstrong}), (\ref{UBstrong}) can be put 
into a more explicit form, which is related to the solution of the Curie-Weiss mean 
field theory. 
The mean-field theory appears in a wide range of physical and probabilistic models
such as spin systems, lattice gases \cite{Ellis} and random graph models \cite{Aristoff,Radin}, and is the simplest model of phase transitions. 

The upper (\ref{UBstrong}) and lower bounds (\ref{LBstrong}) are related to the function
\begin{equation}\label{fvdW}
\lambda_{\rm vdW}(\rho,\beta) = \sup_{0\leq x \leq 1} \{ \log\rho x + k \beta x^2 - I(x) \}
\end{equation}
with $k\to k(a)$ for the lower bound and $k\to \lambda_0(a)$ for the upper bound.
The function $\lambda_{\rm vdW}(\rho,\beta)$ is related to the thermodynamical pressure of a van der Waals system of interacting particles with interaction parameter $\alpha$ and density $d$
\begin{equation}
\lambda_{\rm vdW} = \beta p_{\rm vdW} = -\log(1-d) - \beta \alpha^2 d^2
\end{equation}
The solution of the extremal problem in Eq.~(\ref{fvdW}) is
summarized in Proposition 7 of \cite{PZ} which we reproduce below for convenience.

\begin{proposition}[Mean-field approximation for the Lyapunov exponent]\label{prop:CW}
The solution of the extremal problem (\ref{fvdW}) is given explicitly by
\begin{equation}\label{lambdavdW0}
\lambda_{\rm vdW}(\rho,\beta) = - k \beta x_*^2 - \log(1-x_*)
\end{equation}
where $x_* = x_*(\rho,\beta)$ is the maximizer in the extremal problem.
The maximizer has the physical interpretation of the particle density $d$
and is given explicitly as:

i) For $\beta > \beta_c = \frac{2}{k}$ the maximizer has a sudden transition between
two solutions
\begin{equation}
x_*(\rho,\beta) = \left\{
\begin{array}{cc}
x_2(\rho,\beta) & \mbox{ if } \log\rho > - k\beta \\
x_1(\rho,\beta) & \mbox{ if } \log\rho < - k \beta \\
\end{array}
\right.
\end{equation}
with $x_{1} < x_2$ the smallest and largest solutions of the equation
\begin{equation}\label{xsteq}
\log\rho = -2k \beta x + \log\frac{x}{1-x} \,.
\end{equation}

ii) For $0 < \beta < \beta_c = \frac{2}{k}$, $x_*(\rho,\beta)$ is the unique solution of 
the equation (\ref{xsteq}).

The mean-field solution (\ref{lambdavdW0}) has a phase transition, manifested as a 
discontinuity of its partial derivatives with respect to its arguments, at points along the
phase transition curve
\begin{equation}
\log\rho + k \beta=0
\end{equation}
with $\beta > \beta_c = \frac{2}{k}$. This curve ends at the critical point
$(\rho_c, \beta_c) = (e^{-2},\frac{2}{k})$. Along this curve the equation
(\ref{xsteq}) has solutions $x_{1,2}=\frac12(1 \pm \Delta(\beta))$ where
$\Delta(\beta)$ is the non-zero positive solution of the equation 
$\Delta = \tanh(\frac12 k \beta \Delta)$. Such a solution exists only for $k\beta>2$.

\end{proposition}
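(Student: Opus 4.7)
The plan is to treat the objective $F(x) := \log\rho\, x + k\beta x^2 - I(x)$ as a smooth function on $[0,1]$ and carry out a textbook concavity analysis. First I compute
\begin{equation*}
F'(x) = \log\rho + 2k\beta x - \log\frac{x}{1-x}, \qquad
F''(x) = 2k\beta - \frac{1}{x(1-x)}.
\end{equation*}
Critical points in $(0,1)$ are exactly the solutions of equation~(\ref{xsteq}), and the sign of $F''(x)$ is the sign of $2k\beta\, x(1-x) - 1$. Since $x(1-x)$ attains its maximum $\tfrac14$ at $x=\tfrac12$, if $\beta \le \beta_c = 2/k$ then $F''<0$ throughout $(0,1)$, so $F$ is strictly concave and there is a unique maximizer solving~(\ref{xsteq}) — this gives statement (ii). If $\beta > \beta_c$ the function has a concave–convex–concave structure and~(\ref{xsteq}) can have up to three roots; the middle root is a local minimum and the outer roots $x_1 < x_2$ are the candidates for the global maximizer.

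The next step is to compare $F(x_1)$ and $F(x_2)$ and identify the transition locus. The clean way is to pass to $u := 2x-1$, which symmetrizes $I(x)$: the change of variables $x \mapsto 1-x$ sends (\ref{xsteq}) to itself precisely when $\log\rho = -k\beta$. Along that locus the outer roots become $x_{1,2} = \tfrac12(1\pm\Delta)$, where $\Delta$ is the non-zero solution of $\Delta = \tanh(\tfrac12 k\beta \Delta)$ (such a solution exists exactly when $k\beta > 2$), and the $x\leftrightarrow 1-x$ symmetry forces $F(x_1) = F(x_2)$. For $\log\rho > -k\beta$ the larger root $x_2$ dominates, for $\log\rho < -k\beta$ the smaller root $x_1$ does; this produces the sudden transition claimed in~(i). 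The critical point is then pinned down by requiring the bifurcation $\Delta=0$ to be borderline, i.e.\ $k\beta = 2$, which combined with $\log\rho = -k\beta$ gives $(\rho_c,\beta_c) = (e^{-2}, 2/k)$.

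It remains to derive the compact formula~(\ref{lambdavdW0}). Substituting $\log\rho = -2k\beta x_* + \log\frac{x_*}{1-x_*}$ from~(\ref{xsteq}) into $F(x_*) = \log\rho\, x_* + k\beta x_*^2 - I(x_*)$, the two occurrences of $x_*\log x_*$ cancel, and one is left with
\begin{equation*}
F(x_*) = -k\beta x_*^2 - x_*\log(1-x_*) - (1-x_*)\log(1-x_*) = -k\beta x_*^2 - \log(1-x_*),
\end{equation*}
which is~(\ref{lambdavdW0}).

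The only delicate part is the second step: a brute-force comparison of $F(x_1)$ and $F(x_2)$ is awkward because both roots depend implicitly on $(\rho,\beta)$ through~(\ref{xsteq}). The hard point is therefore recognising the $x \leftrightarrow 1-x$ symmetry on the locus $\log\rho = -k\beta$ and using it to locate the first-order transition curve without solving~(\ref{xsteq}) explicitly; once that is in hand everything else is routine calculus.
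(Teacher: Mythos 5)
The paper does not prove this proposition; it is imported verbatim from Proposition~7 of \cite{PZ}, so there is no in-paper argument to compare against. Your proof is the standard Curie--Weiss concavity analysis and is essentially correct as it stands: the identification of critical points with solutions of (\ref{xsteq}), the classification by the sign of $F''(x)=2k\beta-1/(x(1-x))$ giving strict concavity for $\beta\le 2/k$ and a concave--convex--concave profile for $\beta>2/k$, the observation that $F(1-x)-F(x)=(1-2x)(\log\rho+k\beta)$ so $F$ is symmetric under $x\leftrightarrow 1-x$ exactly on the locus $\log\rho+k\beta=0$, the reduction there of (\ref{xsteq}) to $\Delta=\tanh(\tfrac12 k\beta\Delta)$, and the algebraic substitution of the first-order condition into $F$ that yields (\ref{lambdavdW0}) are all sound.

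The one step you assert rather than argue is that $x_2$ is the global maximizer for $\log\rho>-k\beta$ and $x_1$ for $\log\rho<-k\beta$. Since both outer roots depend on $(\rho,\beta)$, ``the larger root dominates'' needs a line of justification; the clean one is the envelope theorem: because $x_1,x_2$ are critical points of $F$, on the region where both exist one has $\frac{d}{d\log\rho}\left[F(x_2)-F(x_1)\right]=x_2-x_1>0$, which together with $F(x_1)=F(x_2)$ at $\log\rho=-k\beta$ shows the sign of $F(x_2)-F(x_1)$ agrees with that of $\log\rho+k\beta$. It is also worth a sentence that the supremum is always attained in the open interval, since $F'(0^+)=+\infty$ and $F'(1^-)=-\infty$, so boundary maximizers never occur. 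With those two additions the proof is complete.
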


We illustrate the mean-field solution on a numerical example, taking $a=1$ and $\rho=0.025$.
The solutions $x_{1,2}$ of the equation (\ref{xsteq}) for the lower bound $k=k(a)$
are shown in Table~\ref{Table:vdW} for a few
values of the temperature parameter $\beta$ around the transition 
value $\beta_{\rm cr}^{\rm low}=-\frac{\log\rho}{k(a)}=10.0274$. At this point the Lyapunov exponent
is continuous, but its derivative with respect to $\beta$ has a discontinuity. These features are
shown also in graphical form in Figure~\ref{Fig:Lam}.

\begin{figure}[t]
    \centering
\includegraphics[width=2.4in]{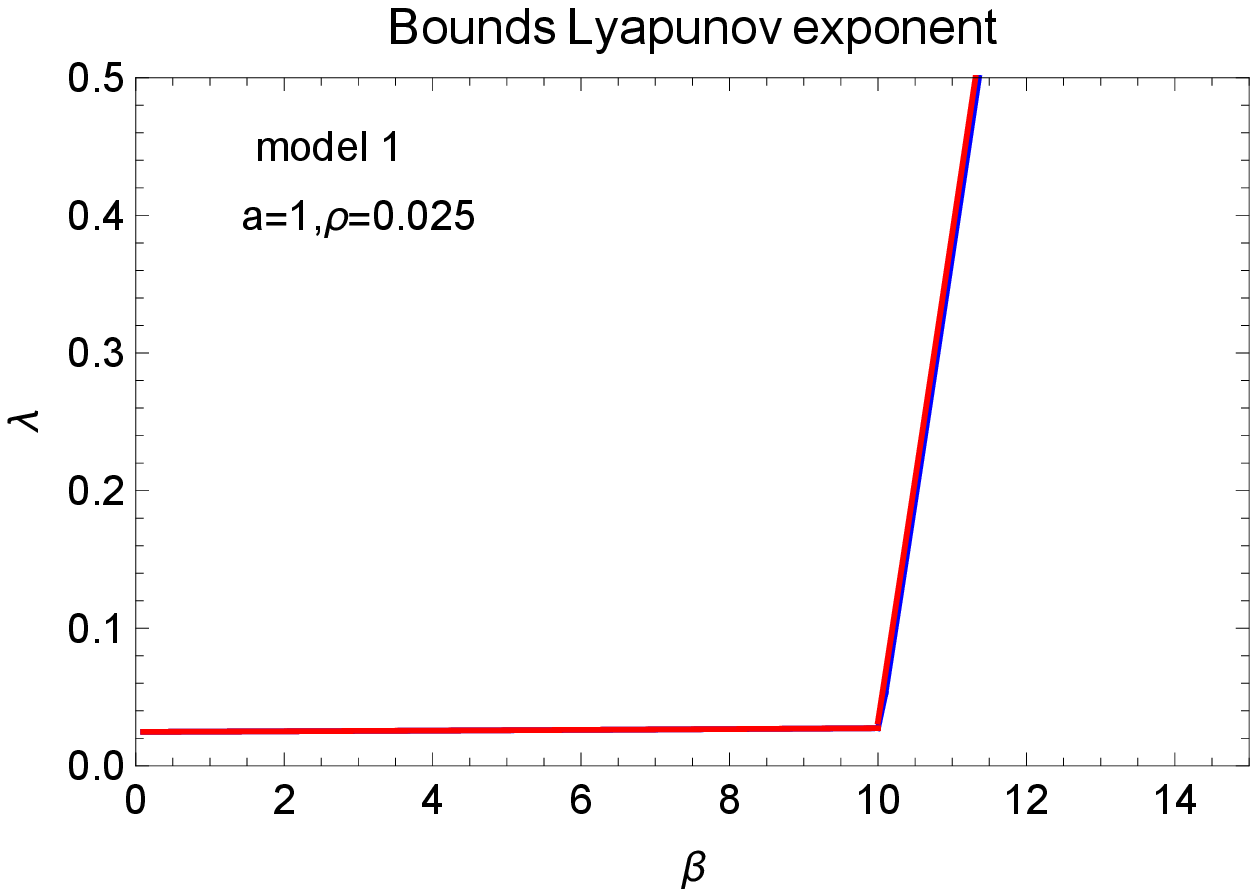}
\includegraphics[width=2.4in]{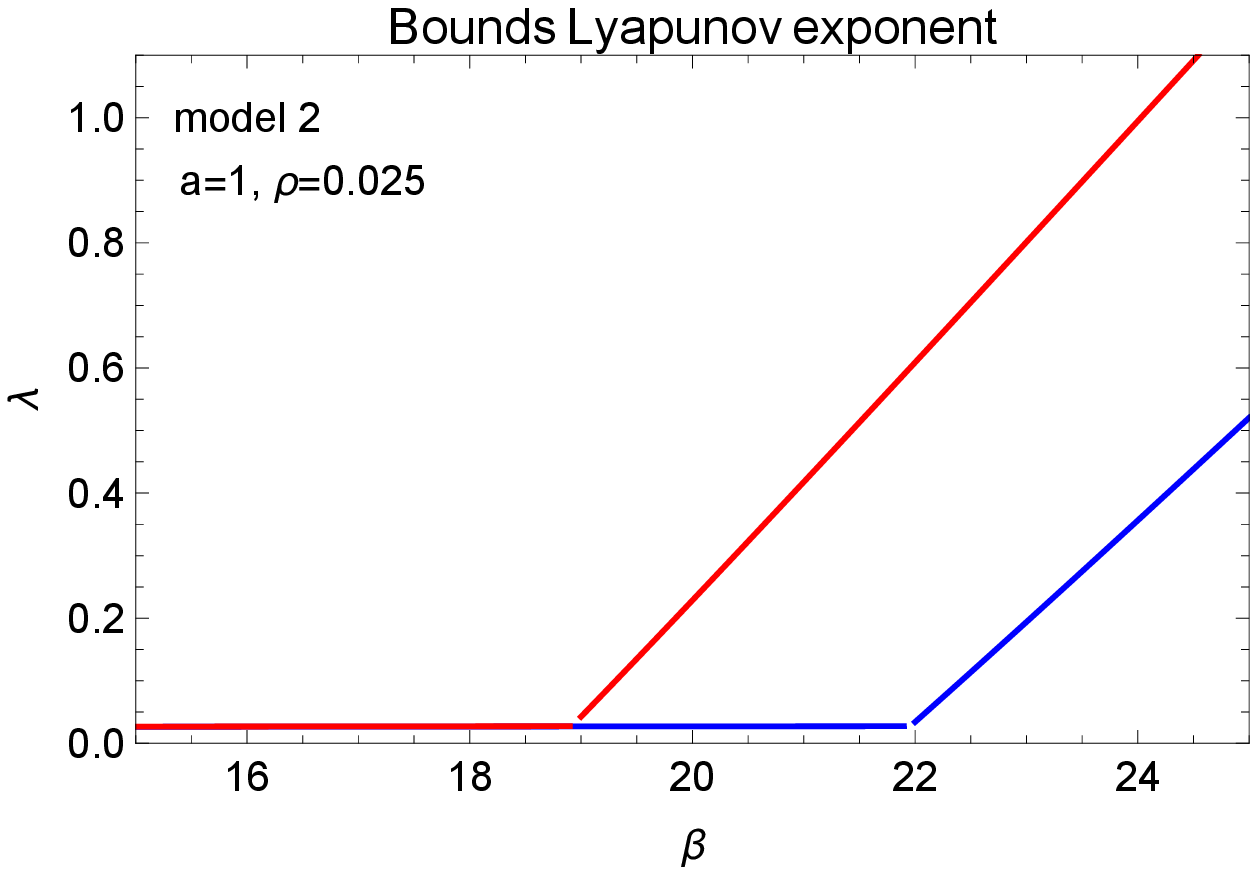}
    \caption{
Lower (blue) and upper (red) bounds on the Lyapunov exponent $\lambda(\rho,\beta)$ vs $\beta$ in the models with stationary OU process (model 1) and OU process started at zero (model 2). The bounds have discontinuous derivative at the transition points $\beta_{\rm cr}^{\rm low}=-\frac{\log\rho}{k(a)}=10.0274$ for the lower bound and by  $\beta_{\rm cr}^{\rm hi}=-\frac{\log\rho}{\lambda_0(a)}=9.986$ for the 
upper bound (for model 1). 
Model parameters $a=1.0, \rho=0.025$.
}
\label{Fig:Lam}
 \end{figure}

\begin{table}[t!]
\caption{\label{Table:vdW} 
The solutions $x_{1,2}$ of the equation (\ref{xsteq}) 
for the vdW approximation corresponding to the lower bound 
around the transition point $\beta_{\rm cr}^{\rm low}(\rho)=10.0274$. 
The Lyapunov exponent $\lambda_{\rm vdW}$ corresponds to the blue curve in the left panel of Fig.~\ref{Fig:Lam}.
The gas-liquid transition temperature is shown in red. 
At the transition temperature the gas and liquid phases are in equilibrium. 
Model parameters $a=1,\rho=0.025$. }
\begin{center}
\begin{tabular}{|c|ccc|}
\hline
$\beta$ & $x_1$  & $x_2$ & $\lambda_{vdW}$ \\
\hline
\hline
9.0 & 0.029 & 0.914 & 0.0271 \\
9.9 & 0.030 & 0.966 &  0.0273 \\
10.0 & 0.030 & 0.969 &  0.0274 \\
{\color{red} 10.0274} & {\color{red} 0.030} & {\color{red} 0.970} & {\color{red} 0.0274}  \\
10.03  &  0.030  & 0.970  &   0.0283 \\
10.1 & 0.030 & 0.972 &  0.0525 \\
10.2 & 0.030 & 0.974 &  0.0874  \\
11.0 & 0.031 & 0.986 &  0.3706  \\
\hline
\end{tabular}
\end{center}
\end{table}

A similar transition takes place for all $\rho<e^{-2}$. The transition takes place as $\beta$ crosses the transition curve $\beta_{\rm cr}^{\rm low,hi}$. The plot of these transition curves are shown in Fig.~\ref{Fig:curve} in variables $(\rho,1/\beta)$ for $a=0.5,1,2$. The lower/upper bound transition curves are shown as blue/red curves. 

For the stationary O-U process (model 1) the curves are too close to be distinguishable, 
due to the closeness of the parameters $\lambda_0(a),k(a)$ noted in Table~\ref{Table:0b}.
The relative difference of these parameters is less than 0.04 over the entire range of values of $a>0$. It reaches its maximum at $a=9.4$. For $a<1$ the relative difference is always below 0.004. Thus the bounds are sufficiently constraining to determine the growth rate 
for model 1, up to a small error, for most practical applications.

\begin{figure}[b]
    \centering
   \includegraphics[width=3.2in]{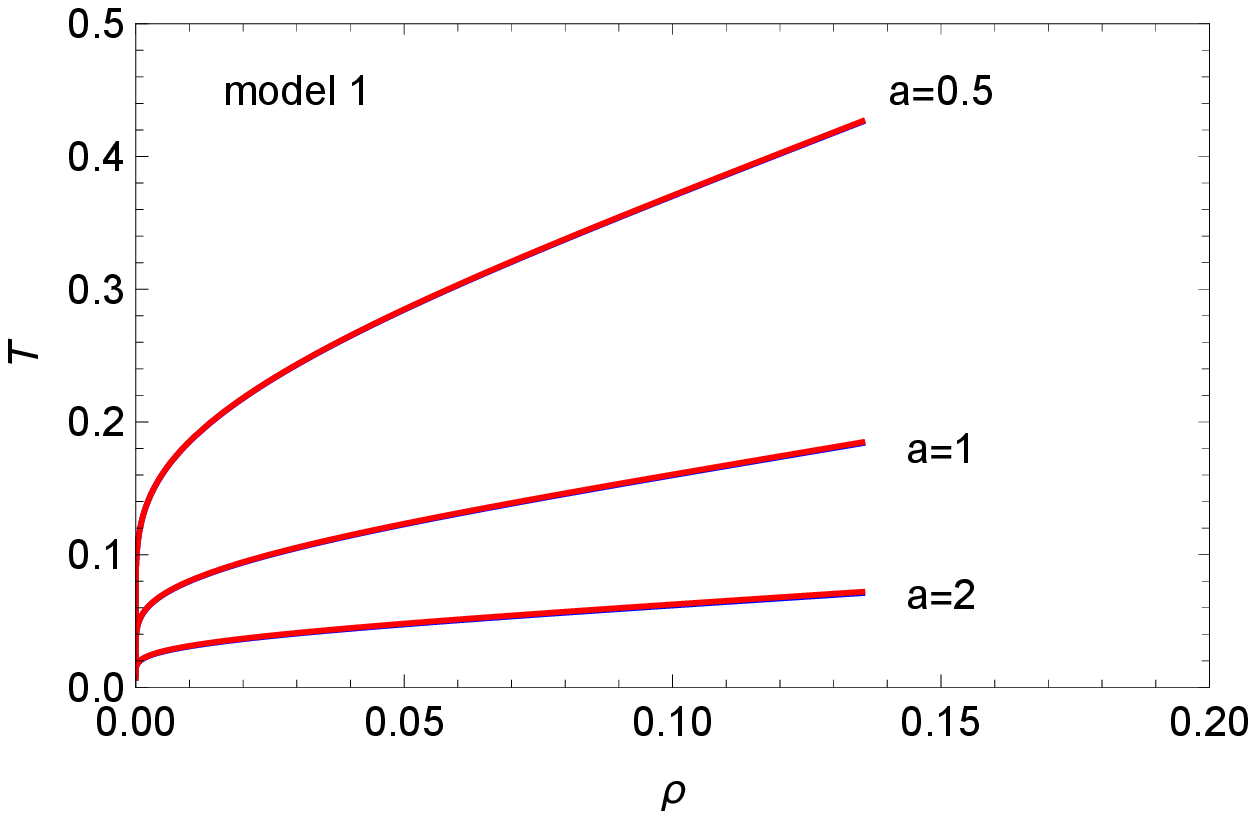}
   \includegraphics[width=3.2in]{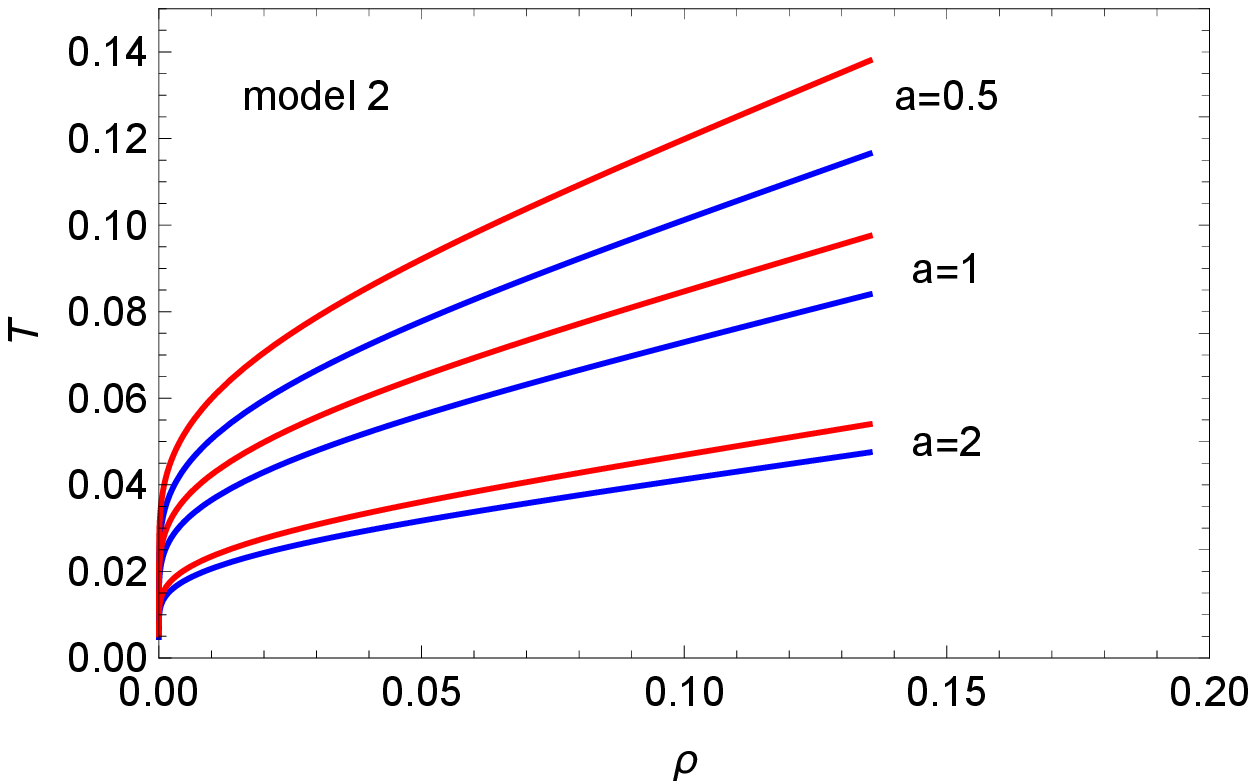}
    \caption{
Phase transition curves in variables $(\rho,T)$ with $T=1/\beta$
for the van der Waals models giving the lower/upper bounds on the Lyapunov exponent in the stochastic growth model with stationary OU process (left),
and in the model with OU process started at zero (right). 
They are shown as blue/red curves. The three
pairs of curves correspond to $a=0.5,1,2$ (from top to bottom).}
\label{Fig:curve}
 \end{figure}

\section{Numerical tests}
\label{sec:5}

The stochastic growth process considered here
\begin{equation}\label{Bprocess}
B_{i+1} = (1 + \rho e^{Z_i -\frac12 var(Z_i)} ) B_i\,,\quad B_1 =1
\end{equation}
can be simulated exactly using the recursive construction of the OU process 
$Z_{i+1} = e^{-\gamma \tau} Z_i + \sqrt{\Delta G} \varepsilon_i$ with 
$\Delta G = \frac{\sigma^2}{2\gamma}(1-e^{-2\gamma\tau})$ and 
$\varepsilon_i = N(0,1)$ iid random numbers. 

The initial condition is $Z_1=N(0,\frac{\sigma^2}{2\gamma})$ for the stationary OU process and $Z_1=0$ for the OU process started at zero.

In the deterministic limit $\sigma=0$ the terminal value of the process has an exponential growth $B_n=(1+ \rho)^{n-1}$. For non-zero volatility the expectation
$\mathbb{E}[B_n]$ increases with $\sigma$. Table \ref{Table:MC} shows 
$\mathbb{E}[B_n]$ and $Var(B_n)$ vs $\sigma$ for a simulation with 
$\rho=0.025,\gamma=\{0.1, 0.25, 0.5\}$ with $n=100$ time steps of length $\tau=0.01$.
Both moments have a smooth increase up to a threshold value of $\sigma$, after which it explodes to very large values. The threshold value increases with $\gamma$.


\begin{table}[t!]
\caption{\label{Table:MC} 
Monte Carlo simulation of $\mathbb{E}[B_n], var(B_n)$ vs $\sigma$ for the stochastic growth process (\ref{Bprocess}) driven by a stationary OU process $Z_i$ with 
mean reversion $\gamma=\{0.1, 0.25, 0.5\}$ and multiplier $\rho=0.025$. 
The time discretization has $n=100$ time steps of $\tau=0.01$. 
The MC simulation used $N_{MC}=10^3$ paths.
The deterministic limit value is $B_{100}=(1+\rho)^{n-1}=11.5256$.}
\begin{center}
\begin{tabular}{c|cc|cc|cc}
\hline
 & \multicolumn{2}{c|}{$\gamma=0.1$}
 & \multicolumn{2}{c|}{$\gamma=0.25$}
 & \multicolumn{2}{c}{$\gamma=0.5$} \\
\cline{2-7}
$\sigma$ & $\mathbb{E}[B_n]$ & $Var(B_n)$ 
               & $\mathbb{E}[B_n]$ & $Var(B_n)$
               & $\mathbb{E}[B_n]$ & $Var(B_n)$ \\
\hline
\hline
0.01 & 11.525 & 0.377 & 11.519 & 0.149 & 11.528 & 0.069 \\
0.05 & 11.972 & 11.991 & 11.753 &  3.868 & 11.587 & 1.602 \\
0.10 & 13.513 & 90.30 & 12.019 & 19.08 & 11.978 & 8.426 \\
0.15 & 15.307 & 482.18 & 13.116 & 65.61 & 12.390 & 20.689 \\
0.20 & 27.73 & 7734.2 & 15.667 & 578.7 & 12.803 & 48.22 \\
0.25 & 575.9 & $5.2\cdot 10^7$ & 17.636 & 869.8 & 13.68 & 151.79 \\
0.30 & 8128.5 & $2.7\cdot 10^{10}$ & 24.61 & 7715.5 & 18.319 & 5,704.5 \\
0.35 & $7.2\cdot 10^5$ & $2.9\cdot 10^{12}$ 
        & 76.09 & $5.7\cdot 10^5$ & 19.54 & 2,008.6 \\
0.40 & $1.3\cdot 10^{10}$ & $1.8\cdot 10^{23}$ 
        & $5.6\cdot 10^4$ & $3.1\cdot 10^{12}$ & 22.27 & 4,558.6 \\
0.45 & $1.2\cdot 10^{11}$ & $1.3\cdot 10^{25}$ 
        & $3.1\cdot 10^3$ & $9.0\cdot 10^9$ & 26.49 & 5,606.1 \\
0.50 & $2.2\cdot 10^{11}$ & $4.2\cdot 10^{25}$ 
        & $8.6\cdot 10^3$ & $6.2\cdot 10^{10}$ & 41.67 & $6.8\cdot 10^4$ \\
\hline
\end{tabular}
\end{center}
\end{table}

The simulation results are shown also in the left plots of Figure~\ref{Fig:MC}, for a wider range of values of $\gamma$. The plots show $\mathbb{E}[B_n]$ with one-standard deviation error bars $\pm \frac{1}{\sqrt{N_{MC}}} stdev(B_n)$. As $\sigma$ increases, we note first an explosion of the standard deviation, followed by a similar explosion in the central value of the expectation itself. The explosion values of $\sigma$ increase with the mean-reversion parameter $\gamma$. 

The right plots in Figure~\ref{Fig:MC} show the growth rates $\lambda_n=\frac{1}{n}
\log\mathbb{E}[B_n]$ vs $\sigma$, comparing with the theoretical lower bound of Proposition~\ref{prop:bounds}
$\lambda(\rho,\beta) \geq \lambda_{vdW}^{\rm low}$ (blue curve), expressed in terms of $\sigma$ using $\beta = \frac12 \sigma^2 n^2 \tau$. The bound has a discontinuous derivative at the transition point $\beta_{cr}^{\rm low}= 
- \frac{\log\rho}{k(a)}$, followed by a rapid increase in the Lyapunov exponent.
This explains the rapid increase of the expectation $\mathbb{E}[B_n]$ for $\sigma$ above a certain transition value, given by $\sigma_{tr}=\sqrt{\frac{2\beta_{cr}^{\rm low}}{n^2\tau}}$. The transition value $\sigma_{tr}$ is shown as the blue vertical line in the
left plots of Figure~\ref{Fig:MC}.

\begin{figure}[t]
    \centering
   \includegraphics[width=2.5in]{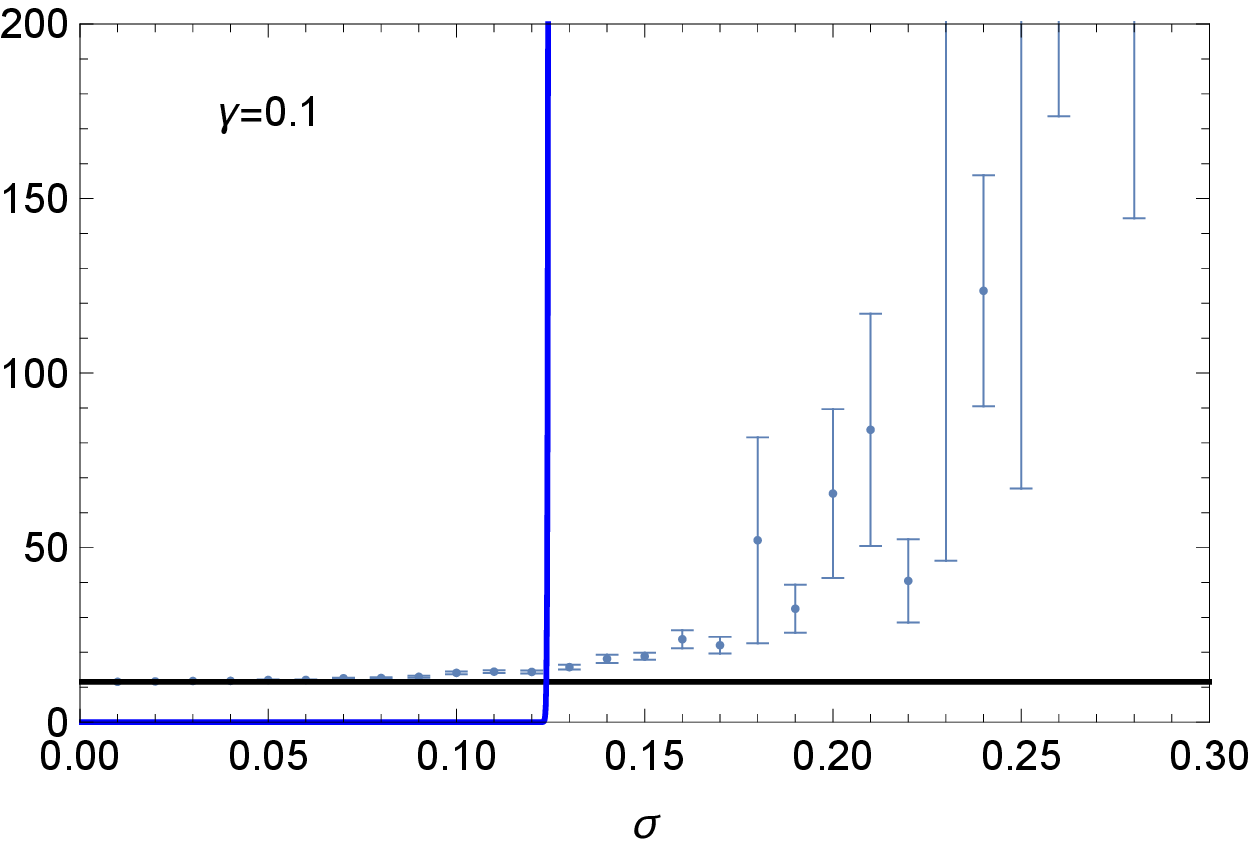}
   \includegraphics[width=2.5in]{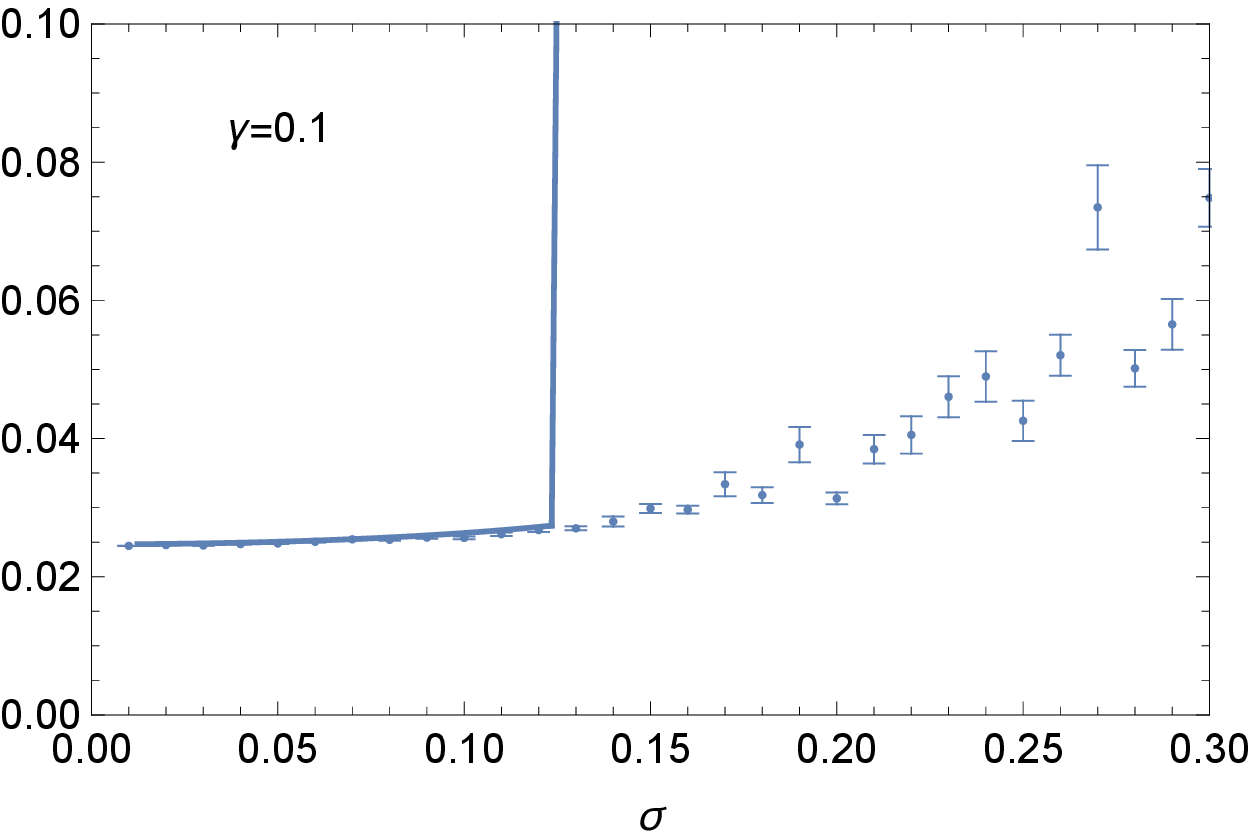}
   \includegraphics[width=2.5in]{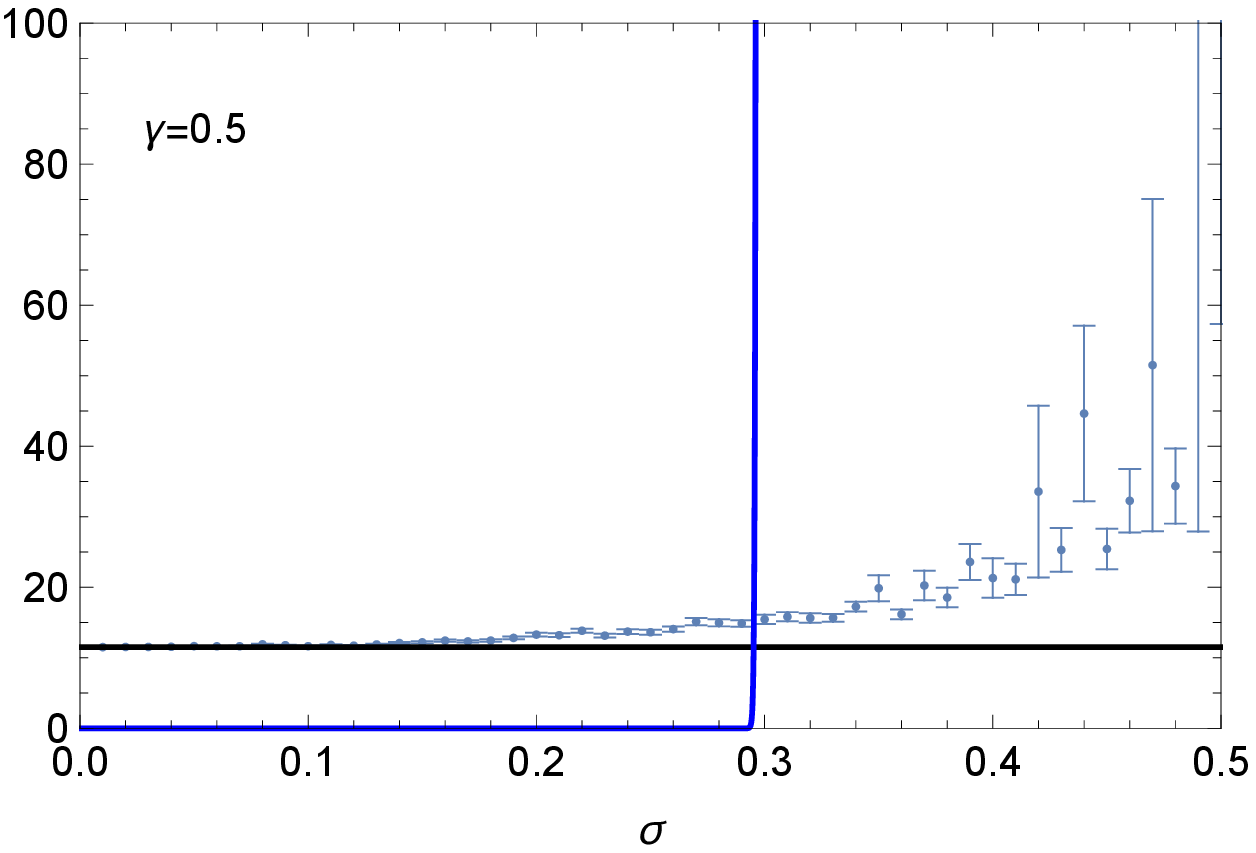}
   \includegraphics[width=2.5in]{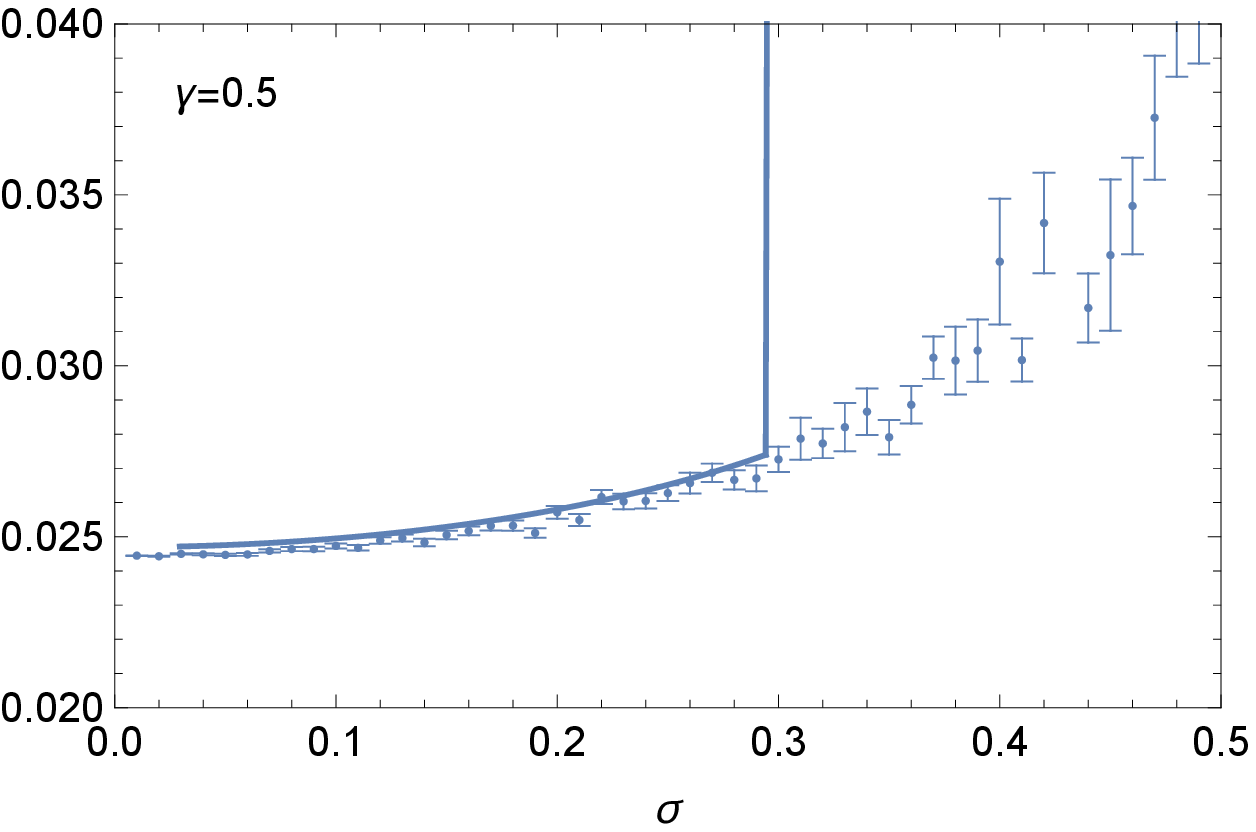}
   \includegraphics[width=2.5in]{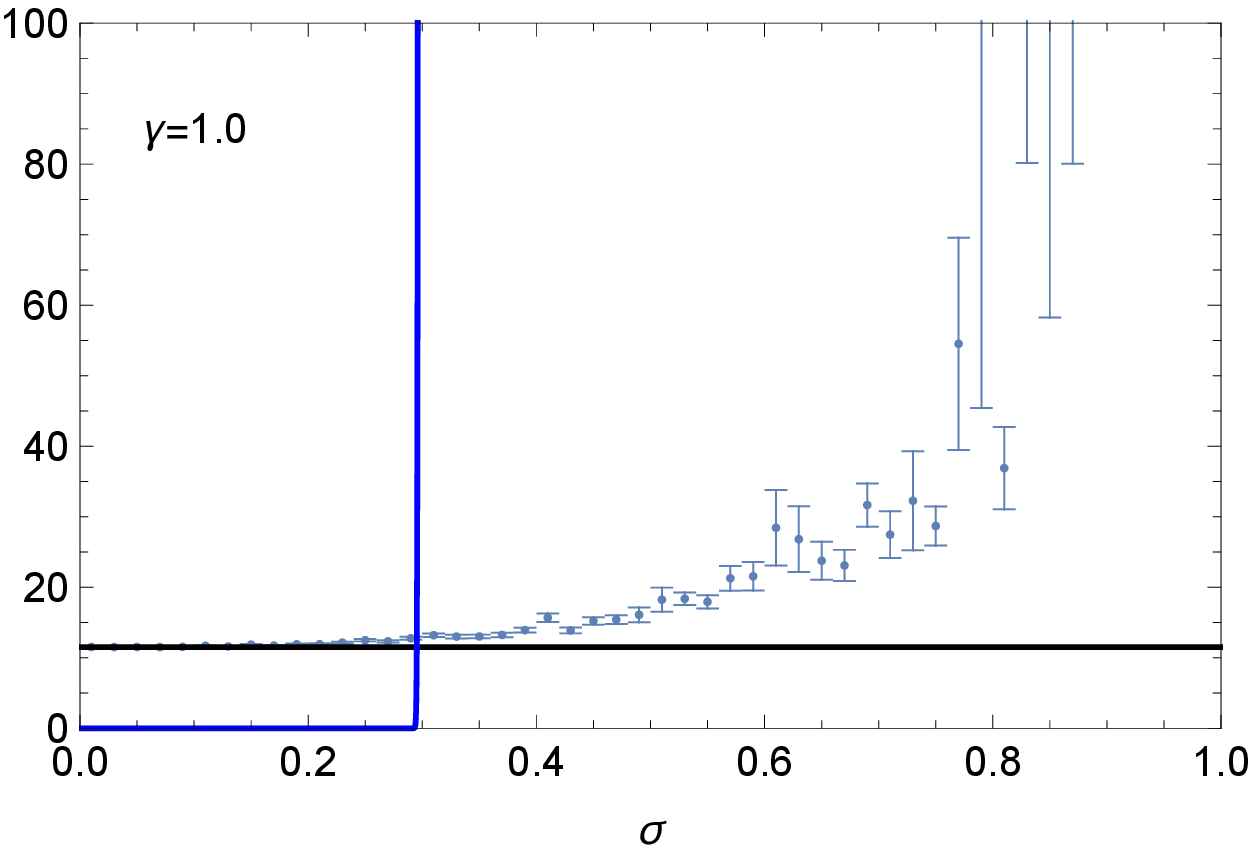}
   \includegraphics[width=2.5in]{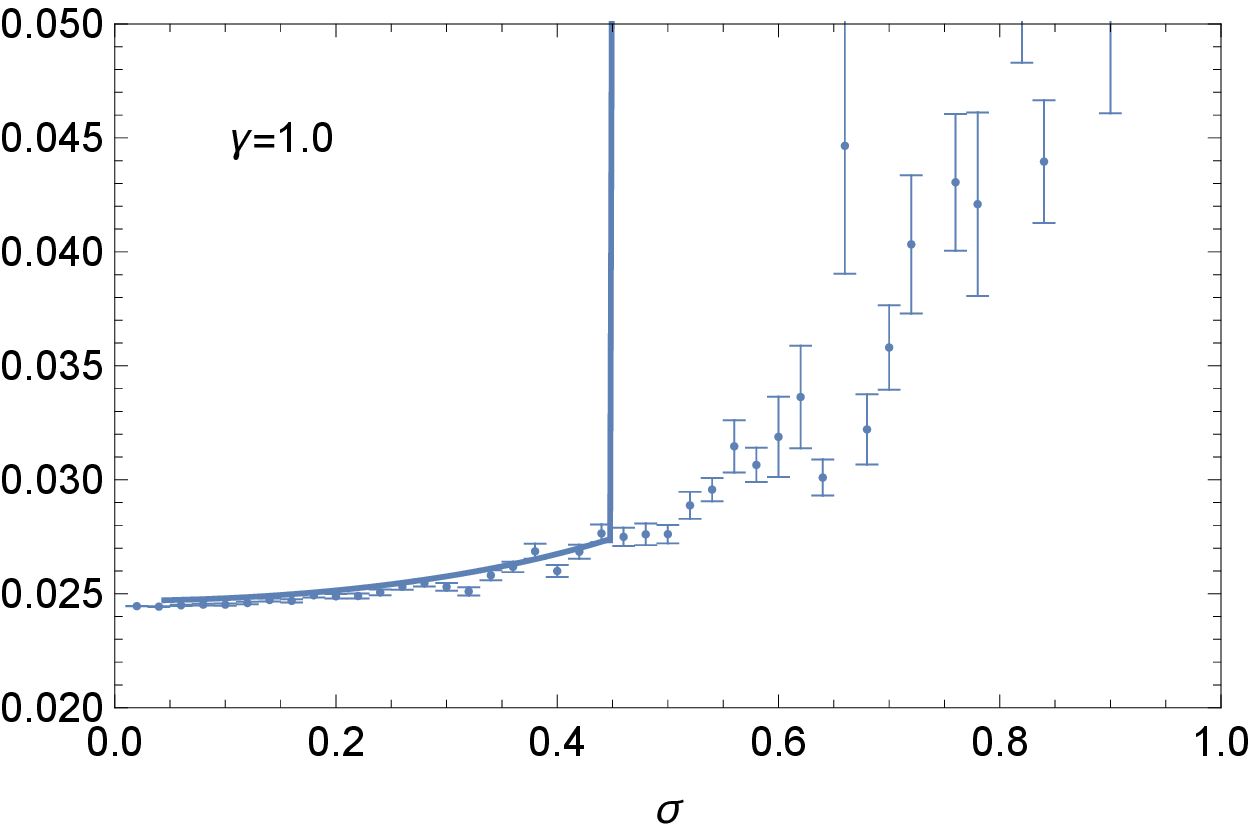}
    \caption{
Monte Carlo simulation of $\mathbb{E}[B_n]$ (left) and $\lambda_n = \frac{1}{n} \log \mathbb{E}[B_n]$ vs $\sigma$ (right) for the stochastic growth process (\ref{Bprocess}) driven by a stationary OU process. The mean reversion is as shown.
The remaining simulation parameters are the same as in Table~\ref{Table:MC}.}
\label{Fig:MC}
 \end{figure}

The remaining discrepancy between the MC simulation and the asymptotic (lower bound) Lyapunov exponent $\lambda(\rho,\beta)$ can be explained by a phenomenon noted in \cite{EDpaper}. The error estimate for $\mathbb{E}[B_n]$ is obtained from the MC estimate of $Var(B_n)$. However the estimate of $Var(B_n)$ from the MC sample is subject to large MC errors due to the explosion of the fourth moment $\mathbb{E}[B_n^4]$ at a smaller value of $\sigma$. This results in an underestimate of the MC error estimate for $\mathbb{E}[B_n]$ and an apparent discrepancy between the observed error
$|\mathbb{E}[B_n]_{th} - \mathbb{E}[B_n]_{MC}|$ and the MC error estimate 
$\frac{1}{\sqrt{N_{MC}}} \sqrt{Var(B_n)}$. 
This phenomenon was studied in \cite{EDpaper} on the example of the
stochastic growth process driven by a geometric Brownian motion, where all integer moments $\mathbb{E}[B_n^q]$ with $q=1,2,\cdots$ can be evaluated exactly by a recursion relation. This allows a direct comparison of the simulation error with the MC error estimate.
A similar study is not possible for the model considered here, where an exact evaluation of moments is not feasible for $n \sim 100$.

For sufficiently small $n$, an exact evaluation of the moment $\mathbb{E}[B_n]$ for the present model is possible, by direct evaluation of the sum in (\ref{sumrho}).
This corresponds to a computation of the grand partition function for a lattice gas with $n$
sites, and involves the summation of $2^n$ terms, corresponding to all possible
occupational configurations of the lattice gas. The complexity of this 
calculation grows very fast with $n$, and is feasible only for small $n$. 

Using this method we evaluated the expectation $\mathbb{E}[B_n]$ for $n=10$ with $\rho=0.025, \tau=1.0$ and several choices of $\gamma$ for a stationary OU process. 
For given $\gamma$, as $\sigma$ increases through a threshold value, the expectation $\mathbb{E}[B_n]$ increases rapidly. The last column in Table \ref{Table:1} 
shows the threshold value $\sigma_{\rm exp}$ for each $\gamma$.

This numerical experiment is similar to the simulation of $\mathbb{E}[ B_n]$ for the stochastic growth process driven by an Ornstein-Uhlenbeck process started at zero 
presented in Section 6 of \cite{JSP}. Figure 7 in \cite{JSP} shows the results of an exact numerical evaluation of the expectation $\mathbb{E}[B_n]$ vs $\sigma$ at fixed mean-reversion $\gamma=0.01, 0.05, 0.1$. From Figure 7 in \cite{JSP} one can see that the expectation $\mathbb{E}[x_n]$ has a rapid increase as the volatility parameter $\sigma$ crosses a threshold value $\sigma_{\rm exp}$. 

We compare the threshold value $\sigma_{\rm exp}$ from the numerical simulation 
with the 
theoretical bounds following from the mean-field approximation for the bounds on the Lyapunov exponent derived in Proposition~\ref{prop:bounds}.

For each value of the mean-reversion parameter $\gamma$ we compute 
the liquid-gas transition temperatures $T_{\rm cr}^{\rm vdW}(\rho)$ corresponding 
to the lower/upper mean-field bounds. They are computed from   
\begin{equation}\label{betacr}
\beta_{cr}^{\rm low} = - \frac{\log\rho}{k(a)}\,,\quad
\beta_{cr}^{\rm hi} = - \frac{\log\rho}{\lambda_0(a)}\,.
\end{equation}

The third and fourth columns of Table~\ref{Table:1} show the prediction for the bounds on the explosion values of the volatility $\sigma$, related to the transition temperatures (\ref{betacr}) as $\sigma_{\rm exp}^{\rm vdW} = \frac{1}{n} \sqrt{\frac{2\beta}{\tau}}$.

The general trend of the explosion volatility $\sigma_{\rm exp}$ obtained from the numerical simulation is reproduced by the theoretical van der Waals predictions from the upper and lower bounds on the Lyapunov exponent. The agreement of the numerical simulation with the theoretical bounds improves with increasing $a$, as expected from the convergence of the bounds in this limit noted in 
Section~\ref{sec:4.1}.
The remaining discrepancies may be explained by the small value of $n$ used in the
numerical simulation $n=10$, as the theoretical predictions are asymptotic
and hold exactly only in the $n\to \infty$ limit. 

\begin{table}[t!]
\caption{\label{Table:1} 
Numerical results for the explosion value of the volatility parameter
$\sigma_{\rm exp}$ at which the average $\mathbb{E}[ x_n]$ has an explosion
for given mean reversion $\gamma$. This is compared with the prediction
from the liquid-gas phase transition in the van der Waals theory for the lower
and upper bounds on the Lyapunov exponent.
The simulation used a lattice with $n=10$ sites and assumed a stationary OU process
with parameters $\rho=0.025, \tau=1$. Recall $a=\gamma\tau n$.}
\begin{center}
\begin{tabular}{c|c|cc|cc|c}
\hline
$\gamma$ & $a$ 
                  & $T_{\rm cr}^{\rm vdW-low}$ & $T_{\rm cr}^{\rm vdW-hi}$
                  & $\sigma_{\rm cr}^{\rm vdW-low}$ & $\sigma_{\rm cr}^{\rm vdW-hi}$ 
                  & $\sigma_{\rm exp}$ \\
\hline
\hline
0.01 & 0.1 & 1.3114 & 1.3114 & 0.1235 & 0.1235 & 0.14 \\
0.05 & 0.5 & 0.2310 & 0.2313 & 0.2942 & 0.2940 & 0.34 \\
0.1  & 1.0 &  0.0997 & 0.1001 & 0.4478 & 0.4469 & 0.52 \\
0.2  & 2.0 &  0.0385 & 0.0389 & 0.7210 & 0.7166 & 0.83 \\
0.3  & 3.0 &  0.0206 & 0.0210 & 0.9858 & 0.9758 & 1.15 \\
0.4  & 4.0 &  0.0128 & 0.0131 & 1.2508 & 1.2340 & 1.47 \\
0.5  & 5.0 &  0.0087 & 0.0090 & 1.5171 & 1.4931 & 1.83 \\
1.0  & 10.0 & 0.0024 & 0.0025 & 2.8631 & 2.8084 & 3.90 \\
\hline
\end{tabular}
\end{center}
\end{table}

\section{Discussion and comparison with the literature}
\label{sec:6}

As discussed in Sec.~\ref{sec:3}, the thermodynamical analog of the compounding process
$x_n$ is a one-dimensional lattice gas with $n$ sites. At each site there is at most one 
particle, and the particles interact by two-body attractive potentials. For the stochastic growth process driven by a stationary Ornstein-Uhlenbeck process the 
two-body potentials are
\begin{eqnarray}\label{eps2}
\varepsilon_{ij} = - \frac{c}{a n} 
\exp\left( - \frac{a}{n} |i-j| \right) \,.
\end{eqnarray}
This is obtained by substituting the scaling of the OU process parameters (\ref{scaling})
$\beta = \frac12 \sigma^2 \tau n^2, a=\gamma\tau n$ into the two-body interaction (\ref{beps1}). For purposes of comparison with the statistical mechanics literature 
to be discussed below we added the constant $c$. The stochastic growth process corresponds to $c=1$.
The lattice gas is at temperature $\beta$ and fugacity $\rho$. 

A similar system was considered by Kac and Helfand in Section 4 of \cite{KH}.
We present next a detailed comparison with their results. 
The authors of \cite{KH} consider a one-dimensional lattice gas with $n$ sites and 
exponential attractive interaction\footnote{We add subscripts
on the constants appearing here to avoid notational confusion with $\gamma$ used 
earlier for the mean-reversion of the OU process $Z_t$.}
\begin{eqnarray}\label{epsKH}
\varepsilon_{ij}^{\rm KH} = -\alpha_{KH} \gamma_{KH} e^{-\gamma_{KH} |i-j|} \,.
\end{eqnarray}
This is a lattice gas version of the Kac, Uhlenbeck, Hemmer system \cite{KUH}, 
which is a continuous one-dimensional gas of particles interacting by two-body exponential attractive  potentials.

Kac and Helfand take the thermodynamical 
limit $n\to \infty$, followed by the limit $\gamma_{KH} \to 0$.
The limits $n\to \infty, \gamma_{KH}\to 0$ can be also taken simultaneously,
provided that one keeps $n\gg \gamma_{KH}^{-1}$. This is called the van der Waals
limit. In this limit the system is 
described by a van der Waals equation of state, and has a liquid-vapor phase 
transition.  Keeping $\gamma$ finite, the system does not have a phase transition. 
The approach to the $\gamma\to 0$ limit is studied in further detail by Helfand \cite{Helfand}. These results are recovered in the Lebowitz, Penrose theory \cite{LP} 
which extends these properties to a more general class of potentials (Kac potentials) 
which include the exponential potential as a particular case.

The exponential factor in (\ref{eps2}) reproduces the Kac and Helfand interaction by identifying the range parameter as $\gamma_{KH}=\frac{a}{n}$. As $n\to \infty$ the range parameter goes to zero as required. The van der Waals condition $n\gg \gamma_{KH}^{-1}$ is satisfied provided that one takes $a \gg 1$.
Setting the overall factor $c = \alpha_{KH} a^2$ reproduces precisely the Kac, Helfand interaction (\ref{epsKH}). 

In conclusion, the general two-body interaction (\ref{eps2}) reduces to the Kac-Helfand interaction (\ref{epsKH}) by taking
\begin{eqnarray}\label{KHconversion}
c= \alpha_{KH} a^2\,,\qquad \gamma_{KH} = \frac{a}{n}\,,
\end{eqnarray}
Furthermore, the van der Waals limit is realized in the large mean-reversion limit $a \gg 1$.

The thermodynamics of the system with interaction (\ref{eps2}) can be obtained immediately
from the results of Section~\ref{sec:4} and applying the analogy to the lattice gas described
in Sec.~\ref{sec:3}. 
The result of Theorem \ref{thm:OU} is adapted by replacing $K(y,z) \to c K(y,z)$, and the results of 
Proposition~\ref{prop:bounds} are modified
by replacing $k(a) \to c k(a), \lambda_0(a) \to c \lambda_0(a)$. 
Furthermore, in Section~\ref{sec:4.2} it was noted that 
$\lambda_0(a), k(a)\simeq \frac{1}{a^2} + O(1/a)$ have the same leading asymptotics 
as $a\to \infty$.  Using $c(a)=\alpha_{\rm KH} a^2$ we get
\begin{equation}
\lim_{a\to \infty} c \lambda_0(a) = \lim_{a\to \infty} c k(a) =\alpha_{\rm KH}\,.
\end{equation}
Thus the upper and lower bounds converge to a common result as $a\to \infty$.

The thermodynamical pressure of the lattice gas with the Kac-Helfand interaction (\ref{epsKH}) becomes
\begin{equation}
p = - T \log(1-d) - \alpha_{KH} d^2
\end{equation}
which coincides precisely with the result obtained in \cite{KH} for the 
pressure of a van der Waals lattice gas (equation (4.11) in \cite{KH}).
The density $d$ is related to the fugacity $\rho$ by replacing $k \to \alpha_{\rm KH}$ in Eq.~(\ref{xsteq}). This gives $\rho = e^{-2\alpha_{KH}\beta d} \frac{d}{1-d}$,
which reproduces equation (4.10) in \cite{KH}\footnote{Kac and Helfand \cite{KH}
express this result
in terms of a reduced temperature parameter $\nu = \frac14 \beta \alpha_{\rm KH}$.}.
The system has a liquid-gas phase transition at points along the curve
\begin{equation}
\log \rho +\alpha_{KH} \beta=0
\end{equation}
which ends at the critical point $\beta_c = \frac{2}{\alpha_{KH}}$.

Going beyond the $a\to \infty$ limit corresponds to a lattice gas with exponential interactions of range $\gamma^{-1} = n/a$ which is still very large, in the $n\to \infty$ limit, but of comparable size with the lattice size $n$. 
An expansion in the range of an exponential interaction was considered by Kac and Thompson in \cite{Kac1969}. In our language this corresponds to including subleading terms of $O(1/a)$. The result of Proposition~\ref{prop:bounds} gives very constraining
upper and lower bounds on the thermodynamical pressure which suggests that the subleading corrections of $O(1/a)$ are small. 
A detailed study of these corrections in the context of the stochastic growth process considered in this paper will be performed elsewhere.

\section{Summary and conclusions}

We studied in this paper the growth rate of the average of a stochastic growth
process with growth rate proportional to the exponential of an Ornstein-Uhlenbeck process.
The asymptotic growth rate can be computed exactly in the limit of a very large number of time steps $n\to \infty$, by appropriately rescaling of the model 
parameters - the mean growth rate $\rho$, the volatility of the OU process $\sigma$, and the mean-reversion $\gamma$ - with $n$. 

Using large deviations theory methods we derived an exact representation for the asymptotic growth rate (Lyapunov exponent) as the solution of a variational problem. The method of proof is similar to that used in \cite{PZ} for a similar stochastic growth process, where the growth rate is proportional to the exponential of a standard Brownian motion.

We derive upper and lower bounds on the asymptotic growth rate (Lyapunov exponent) following from the variational problem. 
For the stochastic growth process driven by a stationary Ornstein-Uhlenbeck process the bounds are very predictive and constrain the growth rate $\lambda$ with an error below 
4\% for all parameter values. 
The bounds are given by solutions to the Curie-Weiss model (mean-field approximation), and have a discontinuous behavior at a certain value of the model parameters. More precisely, the asymptotic growth rate has a rapid increase as the volatility exceeds a minimum value. This agrees with a numerical explosion of the expected value of the growth process observed in numerical simulations.

The stochastic growth process can be mapped to a one-dimensional lattice gas with 
exponential attractive interactions, and the discontinuous behavior of the growth rate corresponds to a liquid-gas phase transition in the lattice gas. The lattice gas has a 
liquid-vapor phase transition, which is seen in the random multiplicative model as an explosion of the expectation $\mathbb{E}[ x_n]$. This phenomenon is related to the 
rapid increase of the pressure of the equivalent lattice gas as the temperature drops below the first order phase transition temperature. 

A similar one-dimensional lattice gas with attractive exponential interactions 
$\varepsilon_{ij} = -\alpha \gamma e^{-\gamma |i-j|}$ was 
considered in a different context by Kac and Helfand \cite{KH},
as a discrete version of a one-dimensional system studied by Kac, Uhlenbeck and 
Hemmer \cite{KUH}. Our results reproduce the Kac-Helfand results 
in the van der Waals limit, when the interaction range $1 \ll \gamma^{-1} \ll n$ is much smaller than the lattice size $n$.
The bounds of Proposition \ref{prop:bounds} imply that the properties of the Kac-Helfand system are well approximated by the van der Waals theory in the thermodynamical limit, 
even for  interaction range comparable with the lattice size $\gamma^{-1} \simeq n$. 

\appendix

\section{Proof of Theorem~\ref{thm:OU}}
\label{sec:app}

\begin{proof}[Proof of Theorem~\ref{thm:OU}]
The expectation of $x_n$ is
\begin{eqnarray}\label{Mn}
M_n &=& \mathbb{E}[x_n] = \mathbb{E}\left[\prod_{i=0}^{n-1} \left(1 + \rho e^{Z_i - \frac12 \mbox{var}(Z_i)}\right)\right] \\
&=& 2^n \mathbb{E}[ \exp( \log\rho \sum_{i=0}^{n-1} Y_i + \sum_{i=0}^{n-1} Y_i (Z_i - \frac12 \mbox{var}(Z_i)) )] \nonumber
\end{eqnarray}
where we introduced $\{Y_i\}_{i=0}^{n-1}$ iid binomial random variables
taking values $Y_1=\{0,1\}$ with equal probabilities. 

The values of the OU process $Z_i$ can be written as sums of iid random variables 
$V_j=N(0,\Delta G)$ as
\begin{eqnarray}\label{Zi}
Z_i = z^{i} Z_0 + \sum_{j=0}^{i-1} z^{i-j-1} V_j
\end{eqnarray}
with $z=e^{-\gamma \tau}$ and $\Delta G = G(t_{i+1} - t_i) = (1-z^2) \frac{\sigma^2}{2\gamma}$ with $G(t) =  \frac{\sigma^2}{2\gamma} (1 - e^{-2\gamma t})$, see (\ref{Gdef}). 
The relation (\ref{Zi}) follows by repeated application of (\ref{OUsol}), which gives $Z_{i+1} = e^{-\gamma \tau} Z_i + \sqrt{\Delta G} \varepsilon$ with
$\varepsilon=N(0,1)$.
For example, we have
\begin{eqnarray}
Z_1 &=& z Z_0 + V_0 \\
Z_2 &=& z^2 Z_0 + z V_0 + V_1 \,,
\end{eqnarray}
For generality we keep $Z_0$ as an unspecified random variable. 
Taking $Z_0=N(0,\frac{\sigma^2}{2\gamma})$ gives the stationary OU process, 
and $Z_0=0$ gives the model with OU process started at zero.

Noting that $\mbox{var}(V_i) = \Delta G = (1-z^2) \frac{\sigma^2}{2\gamma}$ 
we get from (\ref{Zi}) 
\begin{eqnarray}
\mbox{var}(Z_i) &=& z^{2i} \mbox{var}(Z_0) + (1 + z^2 + \cdots + z^{2(i-1)})(1-z^2)
\frac{\sigma^2}{2\gamma} \\
&=& z^{2i} \mbox{var}(Z_0) + (1 - z^{2i}) \frac{\sigma^2}{2\gamma} \,.\nonumber
\end{eqnarray}
We observe that if $Z_0 \sim N(0,\frac{\sigma^2}{2\gamma})$ then all variables
$Z_i$ have the same variance $\mbox{var}(Z_i) = \frac{\sigma^2}{2\gamma}$.
This corresponds to a stationary Ornstein-Uhlenbeck process. However, for
generality we will write the expressions below allowing for non-uniform 
$\mbox{var}(Z_i)$.

Substituting the expression (\ref{Zi}) into (\ref{Mn}), and exchanging the summation over $i,j$ we have
\begin{eqnarray}
\sum_{i=0}^{n-1} Y_i Z_i &=& Z_0 \sum_{i=0}^{n-1} z^{i} Y_i +  
         \sum_{i=0}^{n-1} Y_i \sum_{j=0}^{i-1} z^{i-j-1} V_j \\
&=& Z_0 \sum_{i=0}^{n-1} z^{i} Y_i +  
        \sum_{j=0}^{n-2} V_j \sum_{i=j+1}^{n-1} z^{i-j-1} Y_i + Y_0 V_0\,.
\end{eqnarray}

Let us consider in more detail the argument of the exponential in Eq.~(\ref{Mn}) which we denote as
\begin{eqnarray}
S_n &=& \log \rho \sum_{i=0}^{n-1} Y_i + \sum_{i=0}^{n-1} Y_i Z_i 
     - \frac12 \left(\sum_{i=0}^{n-1} \mbox{var}(Z_i) Y_i\right) \nonumber \\
&=& \log\rho \sum_{i=0}^{n-1} Y_i + Z_0 \sum_{i=0}^{n-1} z^{i} Y_i 
 +  \sum_{j=0}^{n-2} V_j \sum_{i=j+1}^{n-1} z^{i-j-1} Y_i 
 - \frac12 \left(\sum_{i=0}^{n-1} \mbox{var}(Z_i) Y_i \right) + Y_0 V_0 \nonumber
\end{eqnarray}

Taking the expectation over $Z_0, V_i$ we get
\begin{eqnarray}
M_n &=& 2^n \mathbb{E}_{Y_i}\left[e^{\bar S_n + \frac12 Y_0 \mbox{var}(V_0)}\right] \\
\bar S_n &=&
\log\rho \sum_{i=0}^{n-1} Y_i
+\frac12 \mbox{var}(Z_0) (\sum_{i=0}^{n-1} z^{i} Y_i)^2 
+  \frac12 \mbox{var}(V_i) \sum_{j=0}^{n-2} 
   (\sum_{i=j+1}^{n-1}z^{2(i-j-1)} Y_i)^2 \nonumber \\
& &- \frac12 \left(\sum_{i=0}^{n-1} \mbox{var}(Z_i) Y_i\right) \nonumber
\end{eqnarray}

Under the assumed scaling (\ref{scaling}), we have
\begin{eqnarray}
\mbox{var}(Z_0) &=& \frac{\sigma^2}{2\gamma} = \frac{\beta}{a n} \\
\mbox{var}(V_i) &=& (1-e^{-2\gamma\tau}) \frac{\sigma^2}{2\gamma} =
\frac{2a}{n} \cdot \frac{\beta}{a n} = \frac{2\beta}{n^2} \,.
\end{eqnarray}
The last term in $\bar S_n$ is subleading in $n$ and can be neglected.
The same holds for the term $\frac12 \mbox{var}(V_0) Y_0$ which is subleading in the 
large $n$ limit.

Keeping only the leading terms we have
\begin{eqnarray}
M_n &=& 2^n \mathbb{E}_{Y_i}\left[\exp\left(
\log\rho \sum_{i=0}^{n-1} Y_i +\frac{\beta}{2an} (\sum_{i=0}^{n-1} z^{i} Y_i)^2 +  \frac{\beta}{n^2} \sum_{j=0}^{n-2} (\sum_{i=j+1}^{n-1} z^{i-j-1} Y_i)^2\right)\right] \nonumber
\end{eqnarray}

The argument of the exponential can be written as
\begin{eqnarray}\label{Sbar}
&& \log\rho \sum_{i=0}^{n-1} Y_i +\frac{\beta}{2an} (\sum_{i=0}^{n-1} z^{i} Y_i)^2 +  \frac{\beta}{n^2} \sum_{j=0}^{n-2}  (\sum_{i=j+1}^{n-1}z^{i-j-1}  Y_i)^2  \\
&& = n \left[ 
\log\rho \left(\frac{1}{n} \sum_{i=0}^{n-1} Y_i\right) +\frac{\beta}{2a} \left(\frac{1}{n}\sum_{i=0}^{n-1} z^{i} Y_i\right)^2 +  \beta \left( \frac{1}{n} \sum_{j=0}^{n-2}  (\frac{1}{n}\sum_{i=j+1}^{n-1} z^{i-j-1}  Y_i)^2 \right) \right]\nonumber
\end{eqnarray}

The Mogulskii theorem in large deviations theory (see Theorem 5.1.2 in \cite{DemboZeitouni})
says that $\mathbb{P}(\frac{1}{n} \sum_{i=1}^{[n\cdot]} Y_i \in \cdot )$ satisfies a sample path
large deviations principle on the space $L_\infty[0,1]$ with rate function
\begin{equation}
\int_0^1 I(g'(x)) dx\,,\quad I(x) = x \log x + (1-x) \log(1-x) + \log 2
\end{equation}
where $g(0)=0$, $g$ is absolutely continuous with bounded derivative $0 \leq g' \leq 1$,
and the rate function is $+\infty$ otherwise. 
We have informally
\begin{equation}
\mathbb{P}\left( \frac{1}{n} \sum_{i=1}^{[n x]}  Y_i \sim  g(x), 0 \leq x \leq 1 \right)
\sim \exp( - n \int_0^1 I(g'(x)) dx + o(n) )
\end{equation}


Denoting $\frac{1}{n} \sum_{i=1}^{[nx]} Y_i=g(x)$, the contents of the square bracket 
in (\ref{Sbar}) can be written in the $n\to \infty$ limit as
\begin{eqnarray}\label{zeta}
\Lambda[g] := \log \rho g(1) + \frac{\beta}{2a} 
\left(\int_0^1 dg(x) e^{-ax} \right)^2 + 
\beta \int_0^1 dx \left( \int_x^1 dg(y) e^{-a(y-x)} \right)^2 \,.
\end{eqnarray}
Recall $z = e^{-\gamma \tau}$ and we used $\gamma\tau = \frac{a}{n}$ from the scaling (\ref{scaling}).

We are now in a position to evaluate the limit
$\lim_{n\to \infty} \frac{1}{n} \log \mathbb{E}[x_n]$.
This can be done using Varadhan's lemma: if $P_n$ satisfies a large
deviations principle with rate function $\mathcal{I}(x)$ on $\mathbb{X}$, and 
$F:\mathbb{X}\to \mathbb{R}$ is a bounded and continuous function, then
\begin{equation}
\lim_{n\to \infty} \frac{1}{n} \log \mathbb{E}[e^{nF(x)}] = \sup_{x\in \mathbb{X}} \{
F(x)  - \mathcal{I}(x) \}\,.
\end{equation}
For our case it is easy to check that for any $g\in L_\infty[0,1]\cup \mathcal{G}$,
$g\to \log\rho g(1) + \beta \int_0^1 K(y,z) g'(y) g'(z) dy dz $ is a bounded and continuous map.
Hence, by Varadhan's lemma, we conclude that the limit
$\lim_{n\to \infty} \frac{1}{n} \log \mathbb{E}[x_n]$ exists and is given by the solution
of the variational problem (\ref{LambdaOU}).


The functional $\Lambda[g]$ simplifies further when expressed in terms of the
function $f(x)=g'(x)$.
Consider the sum of the second and third terms in Eq.~(\ref{zeta}). 
In the statistical mechanics interpretation they have the meaning
of minus interaction energy
\begin{equation}
 -U = \frac{1}{2a} \left(\int_0^1 dg(x) e^{-ax}\right)^2
  + \int_0^1 dx \left( \int_x^1 dg(y) e^{-a(y-x)} \right)^2 \,.
\end{equation}

The second integral can be written as
\begin{eqnarray}
&& \int_0^1 \left( \int_x^1 e^{-a(y-x)} dg(y) \right)^2 dx  \\
&& = \int_0^1 g'(y) g'(z) \theta(y-x)\theta(z-x) e^{-a(y-x)-a(z-x)} dx dy dz\nonumber \\
&& = \int_0^1 K_2(y,z) f(y) f(z) dx dy dz \nonumber 
\end{eqnarray}
with
\begin{eqnarray}\label{K2def0}
K_2(y,z) := \int_0^1 dx \theta(y-x)\theta(z-x) e^{-a(y-x)-a(z-x)} = 
\frac{1}{2a} (e^{-a|y-z|} - e^{-a(y+z)}) \,.
\end{eqnarray}

This yields
\begin{equation}
-U = \int_0^1 dy dz f(y) f(z) K(y,z)\nonumber
\end{equation}
with $K(y,z)=\frac{1}{2a} e^{-a|y-z|}$. This completes the proof of (\ref{LambdaOU}).

\end{proof}



\textbf{Acknowledgements.} I am grateful to Prof. Joel Lebowitz and the participants at the
Mathematical Physics seminar at Rutgers University, where a first version of these results was presented, for discussions and useful comments. I am indebted to Lingjiong Zhu for
collaboration on a related topic in \cite{PZ}.

\end{document}